  \CheckCommand*\refstepcounter[1]{\stepcounter{#1}%
      \protected@edef\@currentlabel
       {\csname p@#1\endcsname\csname the#1\endcsname}%
  }
  \renewcommand*\refstepcounter[1]{\stepcounter{#1}%
    \protected@edef\@currentlabel
      {\csname p@#1\expandafter\endcsname\csname the#1\endcsname}%
  }
  \def\labelformat#1{\expandafter\def\csname p@#1\endcsname##1}
  \DeclareRobustCommand\Ref[1]{\protected@edef\@tempa{\ref{#1}}%
     \expandafter\MakeUppercase\@tempa
  }
  \newcommand{\numberlike}[2]{%
     \expandafter\def\csname c@#1\endcsname{%
         \expandafter\csname c@#2\endcsname}%
  }
 \def\DefaultNumberTheoremWithin{section}
  \theoremstyle{plain}
  \newtheorem{lem}{Lemma}
     \numberwithin{lem}{\DefaultNumberTheoremWithin}
     \numberwithin{Claim}{\DefaultNumberTheoremWithin}
  \newtheorem{thm}{Theorem}
     \numberwithin{thm}{\DefaultNumberTheoremWithin}
     \numberwithin{cor}{\DefaultNumberTheoremWithin}
  \newtheorem{prop}{Proposition}
     \numberwithin{prop}{\DefaultNumberTheoremWithin}
  \newtheorem{Conjecture}{Conjecture}
     \numberwithin{Conjecture}{\DefaultNumberTheoremWithin}
  \theoremstyle{definition}
  \newtheorem{Definition}{Definition}
     \numberwithin{Definition}{\DefaultNumberTheoremWithin}
  \theoremstyle{definition}
     \numberwithin{Question}{\DefaultNumberTheoremWithin}
  \theoremstyle{definition}
     \numberwithin{Problem}{\DefaultNumberTheoremWithin}
  \theoremstyle{remark}
     \numberwithin{Remark}{\DefaultNumberTheoremWithin}
     \numberwithin{Example}{\DefaultNumberTheoremWithin}
     \numberwithin{Case}{lem}
     \numberwithin{Step}{lem}
  \def\eqref{\ref}
\def\hpic #1 #2 {\mbox{$\begin{array}[c]{l} \epsfig{file=#1,height=#2} \end{array}$}}
\def\vpic #1 #2 {\mbox{$\begin{array}[c]{l} \epsfig{file=#1,width=#2} \end{array}$}}
\def\ignore #1 {}
\def\KK{\mathbb{K}}
\def\FF{\mathbb{F}}
\def\ZZ{\mbox{{$\mathbb{Z}$}}}
\def\NN{\mbox{{$\mathbb{N}$}}}
\def\B{\mbox{${\mathcal B}$}}
\def\K{\mathcal{K}}
\def\Cl{\mathrm{Cl}}
\def\Ho{\tilde{\mathrm{H}}}
\def\Pr{\mathbb{P}}
\newcommand{\Tor}{{\mathrm{Tor}}}
\def \aaron #1 {\marginpar{#1 -AA}}
\newcommand{\pdim}{{\mathrm{pdim}}}
\begin{document}

\title{}
\author[E. Babson]{Eric Babson}
\address{Department of Mathematics, UC Davis, One Shields Ave, Davis, CA 95616, USA}
\email{babson@math.usdavis.edu}
\author[V. Welker]{Volkmar Welker}
\address{Philipps-Universit\"at Marburg \\
Fachbereich Mathematik und Informatik \\
35032 Marburg \\ Germany}
\email{welker@mathematik.uni-marburg.de}
\title[Higher dimensional connectivity and minimal degree]{Higher dimensional connectivity and minimal degree of 
random graphs with an eye towards minimal free resolutions}
\thanks{This material is based upon work supported by the National Science Foundation under Grant No. DMS-1440140 while the second author was in 
residence at the
Mathematical Sciences Research Institute in Berkeley, California, USA}
\maketitle

\section{Introduction}
\label{sec:intro}

In this note we define and study graph invariants generalizing to higher 
dimension the maximum degree of a vertex and the vertex-connectivity 
(our $0$-dimensional cases). These are known to coincide almost surely 
in any regime for Erd\"os-R\'enyi random graphs. We show the same in the 
one dimensional case for a middle density regime and show the easier 
inequality for all dimensions in the same regime.

Our original motivation comes from the study of minimal free resolutions in 
commutative algebra and is explained in \ref{sec:motivation}.

Let us fix some notation.
Consider a (finite simple) graph $\Gamma = 
(V (\Gamma), E(\Gamma))$ with vertices $V = V (\Gamma)$ and edges
$E = E(\Gamma) \subseteq \binom{V}{2}$.  If 
$C \subseteq V$ write 
$\Gamma|_{V \setminus C} = \Gamma - C = (V \setminus C, E \cap \binom{V \setminus C}{2})$
for the subgraph of 
$\Gamma$ induced on $V \setminus C$. If $v\in C\subseteq V(\Gamma)$ write  
$N_\Gamma(v) = \{u \in V (\Gamma)~:~ \{u, v\} \in E(\Gamma)\}$ for the neighbors of $v$ and 
$N_\Gamma (C) = \bigcap_{c\in C} N_\Gamma (c)$ for the
common neighbors of $C$. We denote by $\Cl(\Gamma)\subseteq 2^{V(\Gamma)}$ the clique complex of $\Gamma$ which is the simplicial complex on ground set $V$ whose $i$-simplices $\Cl(\Gamma)[i]=\Cl(\Gamma)\cap \binom{V}{i+1}$ are
the subsets $C \in \binom{V}{i+1}$ such that
$\binom{C}{2} \subseteq E(\Gamma)$.

\begin{Definition}
  If $\Gamma$ is a finite simple graph, $\KK$ a field and $i$ a natural number write 
  $$\kappa^i_\KK(\Gamma)=\min\Big\{|C| ~:~C \subseteq V(\Gamma),\, \Ho^i\big(\Cl(\Gamma-C);\KK\big)\neq 0\Big\}$$  
  for the $i$-dimensional homological connectivity of $\Gamma$
  and
  $$\delta^i(\Gamma)=\min\Big\{ |N_\Gamma(I)|~ :~ I \in \Cl(\Gamma)[i]\Big\}$$ for the maximum degree of an $i$-dimensional simplex in  $\Cl(\Gamma)$. 
  \end{Definition}

As usual in the definition 
the minimum over the empty set is set to be $+\infty$.
Thus $\kappa^0(\Gamma)=\kappa^0_\KK(\Gamma)$ is the minimum number of vertices whose removal disconnects $\Gamma$ (hence independent of $\KK$) and $\delta^0(\Gamma)$ is the minimum degree of a vertex which (except for the complete graph) is also the minimum number of vertices whose removal disconnects $\Gamma$ leaving a singleton component.  

For a finite set $V$ and a probability $p\in[0,1]$ write $\Omega^V_p$ for the Erd\"os-R\'enyi probability model of
random graphs with vertex set $V$
and edges chosen independently with probability $p$.
For a function
$p : \NN \rightarrow [0,1]$ assigning probabilities to each
natural number $\NN = \{1,2,\ldots \}$ we write $p_n$ for $p(n)$.  
In most of our applications we will consider the
Erd\"os-R\'enyi model on vertex set $V = [n] := \{
1,\ldots, n\}$ for some $n \in \NN$. 

\begin{thm}[Bollob\'as, Thomasson \cite{BT}]
  \label{classical}
  If $p:\NN\rightarrow [0,1]$ then
  $$\lim_{n \rightarrow \infty} \Pr_{\Gamma \in \Omega^{[n]}_{p_n}}\Big(\kappa^0(\Gamma) = \delta^0(\Gamma)\Big) = 1.$$
\end{thm}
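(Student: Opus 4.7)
The inequality $\kappa^0(\Gamma) \leq \delta^0(\Gamma)$ holds deterministically: a vertex $v$ of minimum degree has $N_\Gamma(v)$ as a vertex cut unless $\Gamma = K_n$, in which case both invariants equal $n-1$. The theorem's content is therefore $\Pr[\kappa^0(\Gamma) \geq \delta^0(\Gamma)] \to 1$, and my plan is to analyze the failure event by a union bound over ``bad witnesses.'' A failure is witnessed by a partition $V = A \sqcup B \sqcup C$ with $A, B \neq \emptyset$, no $\Gamma$-edges between $A$ and $B$, and every vertex of $\Gamma$ of degree at least $|C|+1$. Since each $v \in A$ has its $\Gamma$-neighbors inside $(A \setminus \{v\}) \cup C$, the degree condition forces $|A| \geq 2$, and symmetrically $|B| \geq 2$.

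For a fixed partition with $|A| = a$, $|C| = c$, and (by relabeling) $a \leq (n-c)/2$, two factors control the witness probability: the absence of $A$--$B$ edges contributes $(1-p_n)^{a(n-a-c)}$, and the condition $\deg_\Gamma v \geq c+1$ for any single chosen $v \in A$ contributes at most $\Pr\!\big[\mathrm{Bin}(a+c-1, p_n) \geq c+1\big]$. Since this latter event is implied by the full bad event, a union bound over the $\binom{n}{c}\binom{n-c}{a}$ choices of partition gives
\[
 \Pr[\kappa^0(\Gamma) < \delta^0(\Gamma)] \leq \sum_{c \geq 0}\sum_{a=2}^{\lfloor(n-c)/2\rfloor} \binom{n}{c}\binom{n-c}{a}(1-p_n)^{a(n-a-c)}\,\Pr\!\big[\mathrm{Bin}(a+c-1, p_n) \geq c+1\big].
\]
(A sharper version would raise the binomial factor to a small power using BK-type reasoning on an appropriate disjoint-edge decomposition; this can be done but is not needed in most regimes.)

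The remaining step, showing the above sum is $o(1)$ uniformly in $p_n \in [0,1]$, is the main obstacle. I would split the analysis into three regimes. \textbf{(i)} If $np_n = o(\log n)$, many vertices are isolated a.a.s., hence $\delta^0(\Gamma) = 0 = \kappa^0(\Gamma)$ and there is nothing to prove. \textbf{(ii)} If $np_n \gg \log n$ the factor $(1-p_n)^{a(n-a-c)} \leq e^{-a(n-a-c)p_n}$ is so small that even the crude entropy bound $\binom{n}{a}\binom{n}{c} \leq (en/a)^a(en/c)^c$ is wiped out term-by-term, after rough control of the binomial tail. \textbf{(iii)} The critical window $p_n \asymp \log n/n$ is the delicate one: here $\delta^0(\Gamma)$ is a small concentrated integer and the dangerous witnesses are those with small $a$, essentially $a \in \{2,3\}$, corresponding to two or three low-degree vertices whose \emph{joint} external neighborhood is strictly smaller than each individual degree. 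The technical heart of the argument is to show that this joint-neighborhood coincidence is too unlikely to occur, which follows from the sharp concentration of $\delta^0(\Gamma)$ in this window together with Poisson-type estimates for the number of vertices of exactly the minimum degree and the unlikeliness of two such vertices sharing nearly all of their neighbors.
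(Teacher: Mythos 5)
You should first know that the paper does not prove this statement at all: it is quoted as a classical theorem of Bollob\'as--Thomason and simply cited as \cite{BT}, so there is no internal proof to compare your argument with; your attempt has to stand on its own as a proof of the classical result. Your setup is sound as far as it goes: the deterministic inequality $\kappa^0\le\delta^0$ (with one caveat: under the paper's Definition 1.1 the complete graph has $\kappa^0(K_n)=+\infty$, not $n-1$, which matters for the edge case $p_n\to 1$ so fast that $\Gamma=K_n$ a.a.s.), the witness structure $V=A\sqcup B\sqcup C$ with $|A|,|B|\ge 2$, and the union-bound term $(1-p_n)^{a(n-a-c)}\Pr\big[\mathrm{Bin}(a+c-1,p_n)\ge c+1\big]$ are all correct, the last because the two events are determined by disjoint sets of potential edges.

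The genuine gap is that the entire difficulty of the theorem sits in your regime (iii), and there you stop at the level of intentions. ``Sharp concentration of $\delta^0$,'' ``Poisson-type estimates for the number of minimum-degree vertices,'' and ``unlikeliness of two such vertices sharing nearly all of their neighbors'' are named but not formulated or proved; that coincidence-of-neighborhoods estimate \emph{is} the content of the Bollob\'as--Thomason theorem, so deferring it means the proof is missing precisely where it is needed. Two concrete unproved reductions: (a) the claim that the dangerous witnesses have $a\in\{2,3\}$ is asserted, not argued --- in the window $p_n\asymp\log n/n$ with $np_n/\log n>1$ the relevant $c=|C|$ can be of order $\log n$, so $\binom{n}{c}$ is superpolynomial and one must check, for every intermediate $a$ up to $(n-c)/2$, that the non-edge factor together with the degree condition (used for more than one vertex of $A$, which is exactly the ``BK-type'' strengthening you wave at but do not carry out) beats both entropy factors; right at the threshold these terms are borderline, so the conclusion cannot be read off from the crude bound you wrote down. (b) The statement is for an arbitrary function $p:\NN\to[0,1]$, so your three regimes do not literally cover a given sequence; you need either estimates uniform in $p\in[0,1]$ or a subsequence argument (minor, but it should be said). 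As it stands this is a reasonable plan with the hard case outsourced to the literature you are trying to reprove, not a proof.
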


Note that $\kappa^1_\KK(\Gamma)$ is the minimum number of vertices whose removal leaves a fundamental group with nontrivial maps to $\ZZ\slash p$ with $p$ the characteristic of $\KK$.  This will often agree with maps to $\ZZ$ but will occasionally depend on $p$ as for instance if $T=\bar H^1(\Cl(\Gamma),\ZZ)$ is nontrivial torsion so that $\kappa^1_{\KK}(\Gamma)\not=0$ iff the characteristic of $\KK$ divides the order of $T$.  

Note that $\delta^1(\Gamma)$ is the minimum number of vertices whose removal leaves an edge as a maximal face in the clique complex.  If the resulting maximal edge is not an isthmus (its ends are connected by a path not using that edge) this gives a $\ZZ$ summand in the first homology.  In many regimes this is asymptotically almost surely true and is the $1$-dimensional analog of not being a complete graph.  

\begin{Definition} Call $p : \NN \rightarrow [0,1]$ middling of exponent $w >0$ if
  $$ -\liminf_{n \rightarrow \infty}\log_np_n< w^{-1}$$ and
  $$\lim_{n \rightarrow \infty} p_n=0. $$
\end{Definition}

\begin{thm} 
	\label{thm:d1=k1}
	If $p : \NN \rightarrow [0,1]$ is middling of exponent $10$ then
	$$\lim_{n\rightarrow\infty}\Pr_{\Gamma\in\Omega^{[n]}_{p_n}}\ \Big(\kappa^1_{\FF_2}(\Gamma)=\delta^1(\Gamma)\Big)=1.$$  
\end{thm}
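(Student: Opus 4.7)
The plan is to prove the two inequalities $\kappa^1_{\FF_2}(\Gamma)\le\delta^1(\Gamma)$ and $\kappa^1_{\FF_2}(\Gamma)\ge\delta^1(\Gamma)$ separately, the first being the easier inequality announced in the introduction. Let $e=\{u,v\}$ be an edge of $\Gamma$ realising $\delta^1(\Gamma)$ and set $C:=N_\Gamma(\{u,v\})$. In $\Cl(\Gamma-C)$ the edge $e$ is a maximal face, because $C$ contains every common neighbour of $u$ and $v$; hence the characteristic function $\chi_e\in C^1(\Cl(\Gamma-C);\FF_2)$ is automatically a $1$-cocycle, and it is a coboundary exactly when $e$ is an isthmus of $\Gamma-C$. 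The expected number of length-$3$ alternative paths $u,x,y,v$ with $x\in N_\Gamma(u)\setminus N_\Gamma(v)$ and $y\in N_\Gamma(v)\setminus N_\Gamma(u)$ is $\Theta(n^2p^3)$, which diverges polynomially in $n$ under middling of exponent $10$; a Chernoff bound combined with a union bound over the $\binom{n}{2}$ ordered pairs shows that a.a.s.\ no edge of $\Gamma$ becomes an isthmus after removing its common neighbours. Hence $\chi_e$ is not a coboundary and $\kappa^1_{\FF_2}(\Gamma)\le|C|=\delta^1(\Gamma)$.

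For the hard direction I must show a.a.s.\ that $\Ho^1(\Cl(\Gamma-C);\FF_2)=0$ for every $C\subseteq V(\Gamma)$ with $|C|<\delta^1(\Gamma)$; equivalently, that $Z_1(\Gamma-C;\FF_2)$ is spanned by the triangles of $\Gamma-C$. A standard second moment calculation first gives $\delta^1(\Gamma)=(1+o(1))np^2$ a.a.s., so only $C$'s of size $O(np^2)$ are relevant. Given a putative non-coboundary cocycle $\phi$ on $\Cl(\Gamma-C)$ I would apply the cocycle-shrinking procedure, replacing $\phi$ by $\phi+\delta\psi$ with $\psi\in C^0$ chosen greedily to minimise $|\mathrm{supp}(\phi)|$; at such a minimum the number of edges of $\mathrm{supp}(\phi)$ incident to any vertex $v$ is at most $d_{\Gamma-C}(v)/2$, and triangle-evenness forces, for each edge $uv\in\mathrm{supp}(\phi)$ and each $w\in N_{\Gamma-C}(\{u,v\})$, exactly one of $uw,vw$ to lie in $\mathrm{supp}(\phi)$. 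Since $|C|<\delta^1(\Gamma)$ keeps $N_{\Gamma-C}(\{u,v\})$ nonempty for every edge $uv$ of $\Gamma-C$, this split seeds a propagation of forced support edges around each seed edge which, after global amplification, should violate the $\le d/2$ bound at some vertex.

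The principal obstacle is that a naive union bound over $C$ fails: the number of candidate subsets of size $\le np^2$ is $\binom{n}{\le np^2}\approx\exp(np^2\log n)$. To bypass this I would isolate a structural property of $\Gamma$ alone, holding a.a.s., which implies the vanishing simultaneously for all small $C$. A promising candidate is a robust triangle-reduction lemma: for each ordered triple $(x,y,z)$ of consecutive vertices on any cycle of $\Gamma-C$, the common neighbourhood $N_\Gamma(\{x,y,z\})$ has expected size $\sim np^3$ with strong concentration, so that a.a.s.\ for every $C$ with $|C|<\delta^1(\Gamma)$ some $w\in N_\Gamma(\{x,y,z\})\setminus C$ survives; the identity $\partial\{x,y,w\}+\partial\{y,z,w\}=\{x,y\}+\{y,z\}+\{x,w\}+\{z,w\}$ then provides a local homotopy move which, together with chord splits where chords are present, should exhibit every cycle of $\Gamma-C$ as a sum of triangle boundaries. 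The middling exponent $10$ is calibrated so that $np^2$, $np^3$, $n^2p^3$ and the analogous counts for multi-vertex triangulations of long chordless cycles all diverge polynomially fast enough to close the union bounds over local witnesses. I expect the hardest step to be quantifying and iterating these robust triangulation estimates inside the punctured graph $\Gamma-C$, especially for chordless cycles long enough to require joint multi-vertex fillings.
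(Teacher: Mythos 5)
Your first half is essentially the paper's \ref{lem:ki<=di} in the case $i=1$: the paper realizes the minimizing edge as a face of an induced $4$-cycle (the boundary of a $2$-dimensional cross polytope), which is the same as exhibiting a $u$--$v$ path avoiding $N_\Gamma(\{u,v\})$, so $\chi_e$ pairs nontrivially with a surviving cycle. One technical caveat: the number of paths $u,x,y,v$ is not a sum of independent indicators, so ``a Chernoff bound'' is not directly available; you need Janson-type estimates or, as the paper does, you can work only with neighborhood-size concentration (\ref{prop:9}(i)), which holds simultaneously for all small vertex sets and gives the needed vertices $x\in N(u)\setminus N(v)$ and then $y\in N(\{v,x\})\setminus N(u)$ deterministically from those events. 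That part is repairable.

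The hard direction, however, rests on a false lemma. Your ``robust triangle-reduction lemma'' asserts that a.a.s.\ for every $C$ with $|C|<\delta^1(\Gamma)$ and every relevant triple $(x,y,z)$ some common neighbour $w\in N_\Gamma(\{x,y,z\})\setminus C$ survives. But $|N_\Gamma(\{x,y,z\})|\approx n\,p_n^3$ while the adversary may take $|C|$ up to $\delta^1(\Gamma)-1\approx n\,p_n^2$, and since $p_n\to 0$ we have $n\,p_n^2\gg n\,p_n^3$; so $C$ can swallow the entire common neighbourhood of any prescribed triple (indeed of order $p_n^{-1}$ disjoint triples at once). The pairwise statement you use correctly ($N_{\Gamma-C}(\{u,v\})\neq\emptyset$ because $|N_\Gamma(\{u,v\})|\geq\delta^1(\Gamma)>|C|$) is exactly the borderline case and does not extend to triples. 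Since this lemma was your only device for beating the union bound over the roughly $\exp(n p_n^2\log n)$ choices of $C$, the second half does not go through. Your minimal-support-cocycle observation is sound and is in fact the paper's \ref{lem:triangle}, but in the paper it is not pushed toward a local degree contradiction (your ``violate the $\leq d/2$ bound'' step is the unproved part); instead it yields, via \ref{lem:dense} and the bipartite edge concentration \ref{prop:9}(ii), that any nontrivial class in $\Ho^1(\Cl(\Gamma-C);\FF_2)$ has every representative of support at least about $\tfrac12 n^2p_n^4$. This alone contradicts nothing; the missing mechanism is the counting argument of \ref{many} and \ref{pair}: a.a.s.\ after deleting any set of size at most $\tfrac32 np_n^2$ at most one pair of vertices is at distance greater than $2$, so every long cycle has a chord and $H_1$ is generated by cycles of length at most $5$ (\ref{sec:cor41.2}); any surviving nontrivial class therefore admits very many ($\geq p_n^{-t}$) pairwise disjoint short cycles pairing with it, and all common neighbours of each such cycle must lie in $C$ (no cone points), which forces $|C|$ to exceed $\delta^1(\Gamma)$ --- a contradiction. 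All probabilistic input enters through statements uniform over vertex subsets (\ref{prop:9}), which is how the union bound over $C$ is actually avoided. To salvage your approach you would need either this disjoint-adapted-cycles mechanism or some genuine substitute; local triangulation of arbitrary cycles of $\Gamma-C$ cannot work as stated.
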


We view $\kappa^i_\KK(\Gamma)$ as an $i$-dimensional version of vertex connectivity. To explain that we need some more notation. 

Let $\KK$ be a field.
If $I\in \Cl(\Gamma)[i]$
(i.e. $I\subseteq V(\Gamma)$, $|I|=i+1$ and $\binom{I}{2}  \subseteq E(\Gamma)$) 
we write $\chi_{\{I\}}$ (resp. $\zeta_{\{I\}}$) for its characteristic
$i$-cochain (resp. $i$-chain).
These form a basis of the cochain group $C^i(\Cl(\Gamma);\KK)$
(resp. the chain group $C_i(\Cl(\Gamma);\KK)$).
Analogously, for a collection $E$ of $i$-cliques in $\Gamma$ we write $\chi_E$ 
(resp. $\zeta_E$) for the $i$-cochain 
$\sum_{I \in E} \chi_{\{I\}}$ (resp. $i$-chain $\sum_{I \in E} \zeta_{\{I\}}$). 
In the case we eventually focus on in which $\KK=\FF_2$ these are all the
$i$-cochains (resp. $i$-chains).  
For any cocycle $\gamma$ in the cocycle group $Z^i(\Cl(\Gamma);\KK)$
(resp. cycle group $Z_i(\Cl(\Gamma);\KK)$) we
denote by $[\gamma]$ its class in $\Ho^i(\Cl(\Gamma);\KK)$
(resp. $\Ho_i(\Cl(\Gamma);\KK)$). 

Now if $I\in\Cl(\Gamma)[i]$ is any $(i+1)$-clique then deleting all vertices
$C=N_\Gamma(I)$ connected to $I$ yields an integer $i$-cocycle
$\gamma=\chi_{\{I\}}\in Z^i(\Cl(\Gamma-C);\ZZ)$.  In the $i=0$ case
$[\gamma]\in\Ho^i(\Cl(\Gamma-C);\ZZ)$ is non-zero unless $C\cup I=V(\Gamma)$.
It follows that $\kappa_\KK^0 (\Gamma)\leq \delta^0(\Gamma)$ unless 
$\Gamma$ is a complete graph. In the $i=1$ case $[\gamma]$ is non-zero
unless $I$ is an isthmus in $\Gamma-C$.  
For $i \geq 2$ the non-triviality condition is slightly more complicated. 
We show that the inequality $\kappa_{\FF_2}^i(\Gamma) \leq \delta^i(\Gamma)$
holds asymptotically almost surely in the middle density regime.  

\begin{prop}
    \label{lem:ki<=di}
    If $i \geq 0$ is an integer and $p : \NN \rightarrow [0,1]$ is middling
    of exponent $2i$ then
	$$\lim_{n\rightarrow\infty}\Pr_{\Gamma\in\Omega^{[n]}_{p_n}}\ \Big(\kappa^i_{\FF_2}(\Gamma)\leq\delta^i(\Gamma)\Big)=1.$$  
\end{prop}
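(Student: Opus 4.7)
The plan is to exhibit, for an $i$-simplex $I^*$ of $\Gamma$ realizing the minimum $\delta^i(\Gamma)=|N_\Gamma(I^*)|$, an explicit nontrivial $i$-cohomology class in $\Cl(\Gamma-C)$ with $C:=N_\Gamma(I^*)$, thereby forcing $\kappa^i_{\FF_2}(\Gamma)\le\delta^i(\Gamma)$. Because every common neighbor of $I^*$ lies in $C$, the simplex $I^*$ is a maximal face of $\Cl(\Gamma-C)$, and so $\chi_{\{I^*\}}\in Z^i(\Cl(\Gamma-C);\FF_2)$ automatically. It will therefore suffice to construct an $i$-cycle $\zeta\in Z_i(\Cl(\Gamma-C);\FF_2)$ with $\zeta(I^*)=1$, since then $[\chi_{\{I^*\}}]\neq 0$ in $\Ho^i(\Cl(\Gamma-C);\FF_2)$.

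My candidate for $\zeta$ is the fundamental class of an $i$-sphere triangulated as the boundary of the $(i+1)$-dimensional cross-polytope. Writing $I^*=\{v_0,\ldots,v_i\}$, I will try to locate distinct vertices $w_0,\ldots,w_i\in V\setminus I^*$ such that $\{v_j,w_j\}\notin E(\Gamma)$ for every $j$ and every other pair from $\{v_0,\ldots,v_i,w_0,\ldots,w_i\}$ is an edge. The non-edges automatically force $w_j\notin C$ (since $C\subseteq N_\Gamma(v_j)$), while the edge conditions make each of the $2^{i+1}$ transversals to the antipodal pairs $\{v_j,w_j\}$ into an $i$-simplex of $\Cl(\Gamma-C)$. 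Their $\FF_2$-sum is then a cycle (the fundamental class of the cross-polytope boundary), and $I^*$, as the all-$v$ transversal, appears with coefficient $1$.

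What remains is the probabilistic existence of such a cross-polytope completion. For a fixed $(i+1)$-set $I\subseteq[n]$ and a fixed ordered tuple $(w_0,\ldots,w_i)$ of distinct vertices in $[n]\setminus I$, the probability of the completion conditions conditional on $I$ being a simplex is $p_n^{3i(i+1)/2}(1-p_n)^{i+1}$, because the required $3i(i+1)/2$ edges and $i+1$ non-edges involve only vertex pairs disjoint from the $\binom{i+1}{2}$ edges internal to $I$. Summing over the $(1+o(1))\,n^{i+1}$ ordered tuples, the expected number of completions per simplex is
\[
\mu_n \;=\; (1+o(1))\,n^{i+1}\,p_n^{3i(i+1)/2}(1-p_n)^{i+1}.
\]
Middling of exponent $2i$ gives $p_n \geq n^{-1/(2i)+\epsilon}$ for some $\epsilon>0$ and all large $n$, so $\mu_n \geq n^{(i+1)/4+\Omega(\epsilon)}$ grows polynomially. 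A Janson-type lower-tail bound on the completion count, followed by a union bound over the $\binom{n}{i+1}$ candidate simplices $I$, then shows that a.a.s.\ every $i$-simplex of $\Gamma$---in particular the minimizer $I^*$---admits a cross-polytope completion, which is what we need.

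The principal technical obstacle is this concentration step: the indicators of different ordered tuples are positively correlated through shared auxiliary vertices $w_j$ and shared random edges, and a useful Janson bound requires estimating the dependency sum $\Delta=\sum_{t\sim t'}\Pr[\text{both tuples are completions}]$. The middling exponent $2i$ is tailored so that the ratio $\mu_n^2/(\mu_n+\Delta)$ diverges polynomially, yielding a tail bound of the form $\Pr[\text{no completion for }I]\leq \exp(-n^c)$ that comfortably absorbs the union bound. (For $i=0$ the middling hypothesis degenerates, but the claim is trivial then: it reduces to the existence of a single non-neighbor of $v_0$ in $V\setminus C$, which holds a.a.s.\ whenever $p_n\to 0$.)
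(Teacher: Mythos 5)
Your topological core is exactly the paper's: complete the minimizing simplex $I^*$ to the boundary of an $(i+1)$-dimensional cross polytope whose antipodal non-edges force the new vertices outside $C=N_\Gamma(I^*)$, note that $\chi_{\{I^*\}}$ is automatically a cocycle in $\Cl(\Gamma-C)$ because $I^*$ is a maximal face there, and kill triviality by pairing with the cross-polytope fundamental cycle. Where you diverge is the probabilistic existence argument. The paper never counts completions: it applies its uniform concentration statement (\ref{prop:9}(i), a Chernoff bound plus a union bound over all vertex sets of size at most $2i$) to get that a.a.s.\ \emph{every} set $C$ of at most $2i$ vertices satisfies $|N_\Gamma(C)|-|N_\Gamma(C\cup\{v\})|>0$, and then builds the antipodal vertices $b_0,\ldots,b_i$ one at a time by a deterministic greedy choice, which automatically works simultaneously for every $(i+1)$-clique. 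Your route instead fixes $I$, computes the expected number of full completions ($\mu_n=(1+o(1))n^{i+1}p_n^{3i(i+1)/2}(1-p_n)^{i+1}$, and your edge/non-edge count and the bound $\mu_n\geq n^{(i+1)/4+\Omega(\epsilon)}$ under middling exponent $2i$ are correct), and asks for an exponential lower-tail bound before union-bounding over the $\binom{n}{i+1}$ candidate simplices. Both handle the randomness of the minimizer correctly; the paper's version is more elementary and needs no control of correlations between completions.

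The one genuine soft spot is the concentration step you flag. Janson's inequality in its standard form applies to \emph{increasing} events (``all pairs in $S_t$ are edges''), whereas your completion events also require the $i+1$ antipodal pairs to be non-edges, so the inequality does not apply verbatim; moreover the cheap fix of dropping the non-edge requirement and subtracting spoiled completions needs an upper-tail bound far above the mean, which Markov does not give at the $o(n^{-(i+1)})$ level your union bound requires. The repair is standard but should be named: use Suen's inequality (in Janson's 1998 form), whose hypotheses allow arbitrary events with a dependency graph; in your setting $\lambda=\mu_n$ is polynomially large, pairs of tuples sharing a vertex in different roles have conflicting edge/non-edge requirements and contribute $0$ to $\Delta$, same-role overlaps on $k$ vertices give $\mu_n^2/\Delta_k\gtrsim n^{k(2i-k+1)/(4i)+\Omega(\epsilon)}>1$, and $\lambda/\delta\gtrsim n$, so one indeed gets $\Pr[\text{no completion for }I]\leq e^{-n^{c}}$ and the union bound closes. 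With that substitution (or by simply adopting the paper's greedy construction, which avoids correlation inequalities altogether), your argument is complete and consistent with the exponent $2i$ in the statement; your separate treatment of the degenerate case $i=0$ is fine.
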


We indeed we conjecture that at least in the middling regime the
analog of \ref{thm:d1=k1} holds for all $i$.

\begin{Conjecture}
    \label{con:main}
    For every integer $i$ there is a $w_i > 0$ so that if
    $p : \NN \rightarrow [0,1]$ is middling of exponent $w_i$ then 
	$$\lim_{n\rightarrow\infty}\Pr_{\Gamma\in\Omega^{[n]}_{p_n}}\ \Big(\kappa^i_{\FF_2}(\Gamma)=\delta^i(\Gamma)\Big)=1.$$
\end{Conjecture}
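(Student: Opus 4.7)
Proposition~\ref{lem:ki<=di} already supplies the bound $\kappa^i_{\FF_2}(\Gamma)\le\delta^i(\Gamma)$ a.a.s.\ in the middling regime of exponent $2i$, so the conjecture reduces to producing an exponent $w_i$ (possibly smaller than $2i$) at which the matching lower bound $\kappa^i_{\FF_2}(\Gamma)\ge\delta^i(\Gamma)$ also holds a.a.s. Equivalently, one wants to show that with high probability, for \emph{every} $C\subseteq V(\Gamma)$ with $|C|<\delta^i(\Gamma)$ the reduced cohomology $\Ho^i(\Cl(\Gamma-C);\FF_2)$ vanishes.

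The key local observation is that the assumption $|C|<\delta^i(\Gamma)$ forces every $i$-simplex $I$ of $\Cl(\Gamma-C)$ to satisfy $|N_{\Gamma-C}(I)|=|N_\Gamma(I)|-|N_\Gamma(I)\cap C|\ge\delta^i(\Gamma)-|C|>0$, so $I$ extends to an $(i+1)$-simplex inside $\Cl(\Gamma-C)$. For any $i$-cocycle $\gamma\in Z^i(\Cl(\Gamma-C);\FF_2)$ the cocycle identity at the $(i+1)$-simplex $I\cup\{v\}$, with $v\in N_{\Gamma-C}(I)$, reads over $\FF_2$
$$\gamma(I)=\sum_{j\in I}\gamma\bigl((I\setminus\{j\})\cup\{v\}\bigr).$$
Hence for each $I\in\supp(\gamma)$ and each $v\in N_{\Gamma-C}(I)$ an odd number of the $i$-simplices $(I\setminus\{j\})\cup\{v\}$, $j\in I$, also lie in $\supp(\gamma)$: the support of $\gamma$ propagates across common-neighbor choices.

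To exploit this, among all pairs $(C,\gamma)$ with $|C|<\delta^i(\Gamma)$ and $\gamma$ representing a nonzero class in $\Ho^i(\Cl(\Gamma-C);\FF_2)$ we choose one minimizing $|C|$ and then $|\supp(\gamma)|$. The plan is to combine the propagation identity with this double minimality to force $\supp(\gamma)$ to contain an $i$-simplex $I$ with $N_\Gamma(I)\subseteq C$; such an $I$ would give $\delta^i(\Gamma)\le|N_\Gamma(I)|\le|C|<\delta^i(\Gamma)$, the desired contradiction. The probabilistic half of the argument is then a union bound over pairs $(C,I)$ with $|C|$ at most the a.a.s.\ value of $\delta^i(\Gamma)$ and $I$ an $(i+1)$-clique whose $\Gamma$-neighbors all lie in $C$; by choosing $w_i$ small enough (so that $p_n$ is large relative to $n^{-1/w_i}$) the expected count of such pairs can be driven to $o(1)$, mirroring the calculation that drives Proposition~\ref{lem:ki<=di}.

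The main obstacle is precisely the structural step of forcing an isolated $i$-simplex into the support of a minimum-support nontrivial cocycle. Propagation alone is insufficient: complexes such as cross-polytopes and their higher-dimensional analogues carry non-trivial $i$-cocycles whose support contains no isolated $i$-simplex, and random clique complexes could \emph{a priori} contain such configurations. Ruling them out must genuinely use the random-graph structure. A natural route is induction on $i$: the link of any vertex in $\Gamma-C$ is itself an Erd\"os-R\'enyi random graph on roughly $p_n n$ vertices of comparable density, and applying the conjecture in dimension $i-1$ to each link reduces $i$-cocycles of $\Cl(\Gamma-C)$, via the long exact sequence of the pair $(\st(v),\lk(v))$, to a controlled list of ``topological witnesses'' whose expected count in $\Gamma$ can be enumerated and shown to be $o(1)$ for $w_i$ sufficiently small. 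The case $i=1$ of Theorem~\ref{thm:d1=k1} is the base of such an induction, and it is the combinatorial enumeration of higher-dimensional witnesses that the conjecture leaves genuinely open.
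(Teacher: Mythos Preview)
The statement you are addressing is a conjecture, and the paper does \emph{not} prove it for general $i$; it establishes only the $i=1$ case (Theorem~\ref{thm:d1=k1}) and the easy inequality (Proposition~\ref{lem:ki<=di}). What you have written is, appropriately, a proof \emph{plan} that openly names its own gap. That gap---forcing a minimum-support nontrivial $i$-cocycle on $\Cl(\Gamma-C)$ to contain an $i$-face with all its $\Gamma$-neighbours in $C$---is indeed the crux, and your own cross-polytope observation shows why propagation alone cannot supply it. So your assessment that the conjecture is ``genuinely open'' is accurate; there is no complete proof to compare.

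It is still worth contrasting your outline with the paper's programme. The paper does \emph{not} try to isolate a single $i$-face inside a minimal cocycle. Instead it formulates an intermediate Conjecture~\ref{con:delicate}: after deleting any $C$ of the relevant size, (1) $\Ho_i(\Cl(\Gamma-C);\FF_2)$ is spanned by cycles of bounded support, and (2) every nontrivial $i$-cocycle on $\Cl(\Gamma-C)$ has support of order $n^{i+1}p_n^{r_i}$ unless some vertex has all its neighbours in $C$. From this the paper derives Conjecture~\ref{con:main} via a packing argument (Lemmas~\ref{many} and~\ref{pair}, Proposition~\ref{prop:conthm}): many disjoint short cycles pairing nontrivially with a putative cohomology class force $|C|$ to exceed $\delta^i(\Gamma)$ by covering their common-neighbour sets. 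The $i=1$ verification of Conjecture~\ref{con:delicate} is then done separately for the ``large cocycle'' and ``short cycle basis'' halves. Your minimality/propagation framework and the paper's norm framework are genuinely different reductions; the paper's buys a clean separation into two estimable quantities, while yours aims for a single structural contradiction but, as you note, does not yet reach it.

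Two local issues in your write-up. First, the paragraph proposing a ``union bound over pairs $(C,I)$ with $N_\Gamma(I)\subseteq C$ and $|C|$ at most $\delta^i(\Gamma)$'' is vacuous as stated: by definition $|N_\Gamma(I)|\ge\delta^i(\Gamma)$ for every $i$-clique $I$, so no such pair exists for \emph{any} graph, and there is nothing probabilistic to bound. The probabilistic work has to happen elsewhere, namely in ruling out the non-isolated cocycle configurations you mention. Second, your phrase ``choosing $w_i$ small enough'' has the monotonicity reversed: middling of exponent $w$ becomes a \emph{stronger} hypothesis as $w$ grows (since $-\liminf\log_n p_n<w^{-1}$ forces slower decay of $p_n$), so one wants $w_i$ \emph{large}. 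Your link-induction idea, on the other hand, is consonant with the paper's own remark that middlingness is inherited by links, and is a reasonable direction to push.
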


\section{Motivation for the invariants}
\label{sec:motivation}

Our original motivation for studying $\kappa^i_\KK(\Gamma)$ comes from commutative algebra. 
For a finite simplicial complex $\Delta$ 
let $S = \KK[\{x_v\}_{v\in \Delta[0]}]$. The Stanley-Reisner ring of $\Delta$ is the $\NN^{\Delta[0]}$-graded $S$-algebra  $\KK[\Delta]=S\slash I$ where $I=(\,x^U=\Pi_{u\in U}x_u\ |\ U\not\in \Delta\,)$.  

One of the most important invariants of finitely generated graded $S$-modules $M$ is the graded Betti table $$\{\beta_{i,i+j}(M)= \dim_\KK \Tor_i^S(M;\KK)_{i+j}\}_{i,j}$$ or equivalently the $S$-ranks arising in any minimal graded free $S$-module resolution of $M$.  In the Stanley-Reisner case there is a cohomological interpretation given by Hochster's formula which connects them to our invariants:

\begin{eqnarray}
  \label{HochsterFormula}
  \beta_{i,i+j}(\KK[\Delta]) & = & \sum_{W \in \binom{\Delta[0]}{i+j}} \dim_\KK \Ho^{j-1}(\Delta|_W;\KK).
\end{eqnarray}

Here $\Delta|_W$ is the induced subcomplex of $\Delta$ with vertices $\Delta|_W[0]=W$.
Note that except for $\beta_{0,0}(\KK[\Delta])=1$ we have $\beta_{i,\ell} (\KK[\Delta]) = 0$ if $\ell\leq i$ or $i\leq 0$.
Boij-S\"oderberg theory has improved our understanding of the possibilities for these Betti tables and it is understood that the restriction to monomial ideals or Stanley-Reisner modules reduces the possibilities but little is understood about what those restrictions are.  
The $i$\textsuperscript{th} row $\{\,\beta_{j,i+j}(M)\,\}_{j \geq 0}$ is called the $i$\textsuperscript{th} strand of the resolution and its length $\lambda^i(M)=\max\{ j~:~\beta_{j,i+j}(M) \neq 0\}$ is an important aspect of the Betti table.  In the Stanley-Reisner case
\ref{HochsterFormula} implies that $$\lambda^i(\KK[\Cl(\Gamma)])=n-\kappa^{i-1}_\KK(\Gamma).$$

Since the determination of the length of a strand through $\kappa^i_\KK(\Gamma)$ involves a 
subtle study of the
local structure of a simplicial complex it is 
not so simple to approach. On the other hand $\delta^i(\Gamma)$ is a much simpler graph
invariant that is more easily amenable to the
classical tools of (random) graph theory. Hence a positive answer to \ref{con:main} would provide a powerful tool for the
probabilistic analysis of strand lengths.

In our analysis of the clique complex some homological measures of complexity come up (see \ref{con:delicate}), 
which translate into 
measures of complexity of modules over the polynomial ring.
We think that these measures have the potential to be of 
independent interest. For that reason, we defines them in the
next paragraph and provide the argument why they can be
seen as generalizations of the homological measures on 
simplicial complexes from \ref{cohomnorm} and \ref{homnorm}.

We set $S=\KK[x_1,\ldots, x_n]$ and 
consider a subset $U \subseteq \binom{\{x_1,\ldots, x_n\}}{i+1}$ as a set of squarefree monomials of degree $i+1$. Then for a graded $S$-module $M$ and $i \geq 1$ 
we set 

\begin{align*} 
\|M\|^i=\min\Big\{\,r &\,:  \,\exists{U\subseteq\binom{\{x_1,\ldots, x_n\}}{i+1},|U|=r}, \\ & \Tor^S_{n-i}(M,\KK)_n\rightarrow\Tor^S_{n-i}(M/(U),\KK)\hbox{ is not injective}\Big\}
\end{align*}

and

\begin{align*}
    \|M\|_i=\min\Big\{\ell\,:\,\Tor^S_{n-i} & (M,\KK)_n\rightarrow\bigoplus_{\genfrac{}{}{0pt}{}{U\subseteq\binom{\{x_1,\ldots, x_n\}}{i+1}}{|U|\geq n-\ell}}\Tor^S_{n-i}(M/(U),\KK) \\ & \hbox{ is injective}\Big\}.
\end{align*}

Consider the case $M = \FF_2[\Cl(\Gamma)]$ is the Stanley-Reisner ring of a clique complex of a graph $\Gamma$.
Using \ref{HochsterFormula} we translate $\|M\|^i$ and
$\|M\|_i$ back into cohomological and 
homological invariants of the complex. 
Let $\Gamma$ be a graph with $V(\Gamma) = [n]$. Then by
\ref{HochsterFormula} we have 
the following $\KK$-vectorspace isomorphism
\begin{align}
    \label{eqhom1}
\Tor^S_{n-i} (\KK[\Cl(\Gamma)],\KK)_n
&\cong& \bar H^{i}(\Cl(\Gamma);\KK)
\end{align}
and
\begin{align}
\label{eqhom2}
\Tor^S_{n-i}(\KK[\Cl(\Gamma)]/(U),\KK) &\cong&
\bigoplus_{j \geq 0} \bigoplus_{W \in \binom{[n]}{n-i+j}} \bar H^{j-1}((\Cl(\Gamma)^U)|_W;\KK),
\end{align}
where $\Cl(\Gamma)^U$ is the simplicial complex with all
faces containing a set supporting a monomial in $U$ removed.
On the side of simplicial complex side the map from 
the right hand side of \eqref{eqhom1} to \eqref{eqhom2} is
induced by the inclusion of simplicial complexes. 
In case $\KK= \FF_2$ assume that $\zeta$ is an $i$-cocycle for $\Cl(\Gamma)$ which represents a non-trivial class and has minimal support in 
its class. Then its image in the right hand side of \eqref{eqhom2} is zero 
if and only if $U$ contains all
$i$-simplices from the support of $\zeta$.
Thus $||\Cl(\Gamma)||^i$ is the minimal supports of a non-trivial $i$-cocycle. We will encounter this invariant 
again for $\KK = \FF_2$ in \ref{cohomnorm} as $\| \bar H^i(\Cl(\Gamma);\FF_2)\|^c$.

Now consider $\|M\|_i$ for $M = \FF_2[\Cl(\Gamma)]$. 
Again using \eqref{eqhom1} and \eqref{eqhom2} and duality between homology and cohomology we see that 
the map from the definition of $\| \FF_2[\Cl(\Gamma)] \|_i$ 
is injective as long as there is no homological basis of
$i$-cycles that is destroyed by the removal any set of 
$\binom{n}{i+1} - \ell$ simplices of dimension $i$. 
of $\Gamma$. This is the case if for all bases there is 
a cycle supported on $\geq \ell$ elements. From this it 
follows that $\| \FF_2[\Cl(\Gamma)]\|_i$ is the 
invariant $\| \bar H_i(\Cl(\Gamma);\FF_2) \|$ from 
\ref{homnorm}.


Finally we would like to mention some related work. 
In \cite{EY} the quotient $\frac{\lambda^i(\KK[\Cl(\Gamma)])}{\pdim(\KK[\Cl(\Gamma)])}$  has been considered in the Erd\"os-R\'enyi model. 
Here $\pdim \KK[\Cl(\Gamma)] = \max_i \lambda^i(\KK[\Cl(\Gamma)])$ is the projective dimension of
$\KK[\Cl(\Gamma)]$. The strongest results from \cite{EY} on that quotient are also for
the middling probability regime but their results do not follow from or imply ours.

In \cite{DPSSW} and \cite{DHKS} among others one can also find results on the projective dimension of random 
monomial ideals in other models.

\section{Measure concentration}

In studying middle density random graphs the uniformity in vertex degrees and in connectivity between large subsets of vertices will play the major role.  

For $k \in \NN$ and a graph $\Gamma$ we denote by
$$D_k(\Gamma)  := \Big\{|N_\Gamma(A)|\ :\ A \subseteq V(\Gamma), |A|= k\Big\}$$
the set of neighborhood sizes of cardinality $k$ vertex sets.  We write $d^-_k(\Gamma)$ for the minimum and $d^+_k(\Gamma)$ for the maximum of $D_k(\Gamma)$. 
Further for $a,b \geq 0$ and a graph $\Gamma$ we denote by 
$$B_{a,b}(\Gamma)  := \Big\{ |E(K_{A,B}) \cap E(\Gamma)|\ :\
     A,B \subseteq V(\Gamma), \,\,\genfrac{}{}{0pt}{}{A \cap B = \emptyset}{|A| \geq a, |B| \geq b} \,\,\Big\}$$
the set of interconnection sizes and write
$b_{a,b}(\Gamma)$ for the minimum of $B_{a,b}(\Gamma)$.  

\begin{prop} 
  \label{prop:9}
  If $p:\NN\rightarrow [0,1]$ is middling of exponent $w$ and $\epsilon>0$ then
  \begin{enumerate}
    \item[(i)] For any integer $k\leq w$ we have
  $$\lim_{n\rightarrow \infty}\Pr_{\Gamma \in \Omega^{[n]}_{p_n}}\Big(n\,p_n^k(1- \epsilon)\leq d^-_k(\Gamma)\leq d^+_k(\Gamma)\leq n\,p_n^k(1+ \epsilon)\Big) =1.$$ 
    \item[(ii)] For any positive $\alpha_0$, $\alpha_1$, $a'$, $\beta_0$, $\beta_1$ and $b'$ with $\alpha_0+\beta_0>1$ and $\frac{\alpha_1+\beta_1+1}{\alpha_0+\beta_0-1}\leq w$ write $a_n=n^{\alpha_0}p_n^{\alpha_1}a'$ and $b_n=n^{\beta_0}p_n^{\beta_1}b'$. Then we have 
 $$ \lim_{n\rightarrow \infty}\Pr_{\Gamma \in \Omega^{[n]}_{p_n}} \Big(b_{a_n,b_n}(\Gamma) \geq a_nb_np_n(1- \epsilon)\Big)  =1.$$

  \end{enumerate}
\end{prop}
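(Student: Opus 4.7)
The plan is to prove both parts by the same standard template: apply a Chernoff-type tail bound to each individual subset-indexed random variable, then union bound over all subsets of the relevant size. The middling hypothesis $-\liminf\log_np_n < 1/w$ is equivalent to the existence of $\delta > 0$ with $p_n \geq n^{-1/w+\delta}$ for all sufficiently large $n$; I would use this form throughout.

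For part~(i), for fixed $A \in \binom{[n]}{k}$ the variable $|N_\Gamma(A)|$ is a sum of $n-k$ independent Bernoulli$(p_n^k)$ indicators (one per vertex $v\notin A$, recording whether $v$ is joined to every vertex of $A$), with mean $(n-k)p_n^k = (1+o(1))np_n^k$. A multiplicative Chernoff bound then gives
$$\Pr\Big(\big||N_\Gamma(A)| - np_n^k\big| \geq \tfrac{\epsilon}{2}\,np_n^k\Big) \leq 2\exp(-c\epsilon^2 np_n^k)$$
for some absolute $c>0$ and all large $n$. Union bounding over the $\binom{n}{k}\leq n^k$ choices of $A$ yields a failure probability at most $2\exp(k\log n - c\epsilon^2 np_n^k)$. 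Since $k\leq w$, one has $np_n^k \geq n^{1-k/w+k\delta} \geq n^{k\delta}$, which grows polynomially in $n$ and absorbs the $k\log n$ cost with plenty of room.

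For part~(ii), for fixed disjoint $A$, $B$ with $|A|=a_n$, $|B|=b_n$, the count $|E(K_{A,B})\cap E(\Gamma)|$ is Binomial$(a_nb_n,p_n)$ with mean $a_nb_np_n$, and Chernoff's lower tail gives a bound $\exp(-c\epsilon^2 a_nb_np_n)$. Because the edge count is monotone under enlarging $A$ or $B$, the infimum defining $b_{a_n,b_n}(\Gamma)$ is attained at the smallest allowed sizes, so it suffices to union bound over $\binom{n}{a_n}\binom{n}{b_n}\leq \exp((a_n+b_n)\log n)$ pairs. The task reduces to $a_nb_np_n \gg (a_n+b_n)\log n$, and using the crude estimate $a_n,b_n\leq n$ this will follow from $a_nb_np_n\gg n\log n$. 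Substituting the scaling $a_nb_np_n = a'b'\,n^{\alpha_0+\beta_0}p_n^{\alpha_1+\beta_1+1}$ and applying $p_n\geq n^{-1/w+\delta}$ gives
$$a_nb_np_n \;\geq\; C\,n^{\alpha_0+\beta_0-(\alpha_1+\beta_1+1)/w+(\alpha_1+\beta_1+1)\delta};$$
the hypothesis $(\alpha_1+\beta_1+1)/w \leq \alpha_0+\beta_0-1$ makes this exponent at least $1+(\alpha_1+\beta_1+1)\delta$, strictly exceeding $1$, so $a_nb_np_n$ beats $n\log n$ polynomially.

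The main obstacle is really just the exponent bookkeeping in part~(ii): confirming that the somewhat unfamiliar form of the hypothesis $\frac{\alpha_1+\beta_1+1}{\alpha_0+\beta_0-1}\leq w$ is exactly calibrated to give $a_nb_np_n$ a factor of $n^{1+\eta}$ for some $\eta>0$, just enough to beat the $\exp(O(n\log n))$ union-bound cost. No probabilistic ideas beyond the standard multiplicative Chernoff bound are required.
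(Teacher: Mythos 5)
Your proof is correct and follows essentially the same route as the paper: a Chernoff--Hoeffding tail bound for each fixed subset (or pair of subsets) followed by a union bound, with the middling hypothesis supplying exactly the polynomial margin $np_n^k\geq n^{k\delta}$ resp.\ $a_nb_np_n\gtrsim n^{1+\eta}$ needed to absorb the entropy cost. The only minor difference is in (ii), where you first reduce by monotonicity to pairs of exact sizes $\lceil a_n\rceil,\lceil b_n\rceil$ and union-bound over $\binom{n}{a_n}\binom{n}{b_n}$ pairs, whereas the paper bounds all $3^n$ disjoint pairs at once; both versions rest on the same exponent bookkeeping coming from $\frac{\alpha_1+\beta_1+1}{\alpha_0+\beta_0-1}\leq w$.
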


For the proof of \ref{prop:9} we use well known results on
tails. Our formulation of the Chernoff-Hoeffding bound
is an immediate consequence of \cite[Corollary 21.7]{FK}.

\begin{thm}[Chernoff-Hoeffding]
    \label{thm:CH}
    If $(\Sigma,\B,\nu)$ is a probability space, $X:\Sigma\rightarrow [0,1]^m$ is measurable with $\nu$-independent coordinate projections, $s:[0,1]^m\rightarrow[0,m]$ is the coordinate sum, $\mu$ the $\nu$-mean of $\sigma X$ and $0 < \epsilon < 1$ then \begin{align*}
        \Pr_{\omega\in\Sigma}\Big(s X(\omega) \not\in \big[ \mu(1-\epsilon),\mu(1+\epsilon) \big] \Big)\leq 2\,e^{\frac{-\mu\epsilon^2}{3}}
  \end{align*} 
\end{thm}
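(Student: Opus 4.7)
The plan is to run the standard exponential-moment (Chernoff) argument on each tail separately and then combine with a union bound. Write $X=(X_1,\ldots,X_m)$ for the coordinate projections, so each $X_i$ takes values in $[0,1]$, they are mutually $\nu$-independent by hypothesis, and $sX=\sum_iX_i$ has $\nu$-mean $\mu=\sum_i\mu_i$ where $\mu_i=\E[X_i]$. First I would split the deviation event as $\{sX\geq(1+\epsilon)\mu\}\cup\{sX\leq(1-\epsilon)\mu\}$ and bound each piece.

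For the upper tail, for an arbitrary parameter $t>0$, Markov's inequality applied to $e^{t\cdot sX}$ combined with independence gives
\[
\Pr\!\bigl(sX\geq(1+\epsilon)\mu\bigr)\leq e^{-t(1+\epsilon)\mu}\prod_{i=1}^m\E\!\bigl[e^{tX_i}\bigr].
\]
Since $X_i\in[0,1]$, convexity of $x\mapsto e^{tx}$ yields the pointwise bound $e^{tX_i}\leq 1+X_i(e^t-1)$, and therefore $\E[e^{tX_i}]\leq 1+\mu_i(e^t-1)\leq\exp\!\bigl(\mu_i(e^t-1)\bigr)$. Multiplying across $i$ produces $\E[e^{t\cdot sX}]\leq e^{\mu(e^t-1)}$. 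Optimizing the resulting exponent $\mu(e^t-1)-t(1+\epsilon)\mu$ at $t=\ln(1+\epsilon)$ gives the clean bound $\exp\!\bigl(\mu[\epsilon-(1+\epsilon)\ln(1+\epsilon)]\bigr)$.

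The lower tail is handled symmetrically: Markov applied to $e^{-t\cdot sX}$ with $t>0$, the same factorization and convexity step, and optimization at $t=-\ln(1-\epsilon)$ yield $\Pr(sX\leq(1-\epsilon)\mu)\leq\exp\!\bigl(\mu[-\epsilon-(1-\epsilon)\ln(1-\epsilon)]\bigr)$. A union bound across the two tails then contributes the factor of $2$ in the stated conclusion.

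The only non-formal step is the pair of elementary calculus inequalities
\[
\epsilon-(1+\epsilon)\ln(1+\epsilon)\leq-\tfrac{\epsilon^2}{3},\qquad -\epsilon-(1-\epsilon)\ln(1-\epsilon)\leq-\tfrac{\epsilon^2}{2},
\]
valid for $0<\epsilon<1$. Both follow from Taylor-expanding $\ln(1\pm\epsilon)$ and checking that the resulting remainder series has the correct sign (equivalently, that an explicit one-variable function is monotone on $(0,1)$); this is the main, and essentially the only, obstacle, and it is routine. Taking the worse of the two exponents yields $-\mu\epsilon^2/3$ on both sides, so summing gives the advertised $2e^{-\mu\epsilon^2/3}$.
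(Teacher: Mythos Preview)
Your argument is the standard Chernoff exponential-moment proof and is correct as written; the two calculus inequalities you isolate are indeed valid on $(0,1)$ and finish the job. The paper, however, does not give its own proof of this theorem at all: it simply records the statement and cites it as an immediate consequence of \cite[Corollary~21.7]{FK}. So there is no paper-side argument to compare against---you have supplied a complete self-contained proof where the authors chose to invoke the literature.
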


\begin{proof}[Proof of \ref{prop:9}]
  We will apply \ref{thm:CH} for $(\Sigma,\B,\nu) = \Omega^{[n]}_{p}$
  the Erd\"os-R\'enyi model and in the end choose the probability $p=p_n$. 

  For (i) fix $k$ vertices $A\subseteq [n]$ and
  take $m=n-k$ and $X(\Gamma)$ the 
  projection to
  the indicator vector
  of $N_\Gamma(A)$. In particular, we
  have $s\, X(\Gamma) = |N_\Gamma(A)|$
  and $\mu=m\,p^k$.
  Set
  \begin{align*}
  \epsilon & = \frac{\sqrt{3 \, \ln(m)\,(k+1)}}{  m^{\frac{1}{2}}p^{\frac{k}{2}}}.
  \end{align*}
  From the Chernoff-Hoeffding \ref{thm:CH} we get that
  if $\epsilon < 1$ then
  \begin{align*} 
     \Pr_{\Gamma\in\Omega^{[n]}_{p}}\Big(|N_\Gamma(A)| \not\in \big[m\,p^k(1-\epsilon),m\,p^k(1+\epsilon)\,\big]\,\Big) \leq 2 \,m^{-k-1}.
  \end{align*}
  Taking the sum over the $\frac{1}{k!}\,n(n-1)\cdots(m+1)\leq n^k$ choices for $A$ gives
  $$\Pr_{\Gamma\in\Omega^{[n]}_p}\Big(D_k(\Gamma)\not\subseteq \big[m\,p^k(1-\epsilon), m\,p^k(1+\epsilon)\,\big]\Big)\leq 2\,\Big(\frac{n}{m}\Big)^k m^{-1} .$$  
  Now set $p=p_n$ 
  which is middling of exponent $k$ so we have
  $\lim_{n\rightarrow \infty}\epsilon=0$.
  Since $\frac{n}{m} = \frac{1}{1-\frac{k}{n}} \leq 2$ for
  $n \geq 2k$ the right hand side goes to $0$ as $n$ grows.
  As a consequence (i) follows.  

  We now turn to (ii). 
  Fix $a$ and $b$ and let $A$ and $B$ be subsets of $[n]$ such that
  $A \cap B = \emptyset$, $|A| \geq a$ and $|B|\geq b$. 
  Let 
  $X$ be the projection to the indicator vector 
  of the edges connecting $A$ and $B$.  
  In particular $s\, X(\Gamma) = |E(K_{A,B}) \cap E(\Gamma)|$ 
  and $$\mu = |A| \, |B|\,p.$$
  Set 
  $$\epsilon :=\sqrt{\frac{3 (\ln(3^n) + n)}{abp}}.$$
  By \ref{thm:CH} if $\epsilon < 1$ then
  $$\Pr_{\Gamma \in \Omega_{p}^{[n]}}
  \Big( |E(K_{A,B}) \cap E(\Gamma)| \leq a\,b\,p\,(1-\epsilon)\Big)
  \leq 2\,3^{-n}\,e^{-n}.$$
  Since there are fewer than $3^n$ choices for $A$ and $B$ summing as before gives
  $$\Pr_{\Gamma \in \Omega^{[n]}_{p}} \Big(b_{a,b}(\Gamma) \leq a\,b\,p\,(1- \epsilon)\Big) \leq 2\,e^{-n}. $$

 Finally take $a=a_n$, $b=b_n$ and $p=p_n$ as in the hypotheses the
  assertion follows from $\lim_{n\rightarrow \infty} \frac{n}{a_nb_np_n} = 0$. 
\end{proof}

\section{Proof of \ref{lem:ki<=di} and \ref{thm:d1=k1}}

In the proofs we will make use of the pairing between
cochains and chains. For a number $i \geq 0$, a 
cochain $\chi \in C^i(\Cl(\Gamma);\KK)$ and a chain $\zeta \in C_i(\Cl(\Gamma);\KK)$ we write 
$(\chi,\zeta)$ to denote the evaluation of $\chi(\zeta)$. 
It is well known that this pairing is constant on
cohomology and homology classes and hence extends to a
paring between classes.

\subsection{$\kappa_\KK^i \leq \delta^i$}

\begin{proof}[Proof of \ref{lem:ki<=di}]
  Let $\Gamma$ be a graph and $A\in\Cl(\Gamma)[i]$ such that 
  $|N_\Gamma(A)|=\delta^i(\Gamma)$.
  
  \noindent {\sf Claim:} Aas every $A\in \Cl(\Gamma)$ with
  $|A|\leq i+1$ is a facet of an induced subcomplex of $\Cl(\Gamma)$
  isomorphic to the boundary of a cross polytope.  
  
  Let us first show that the claim implies the assertion.
  If the claim holds there is an integral homology cycle supported on the
  cross polytope which pairs to one with the cohomology class
  $$[\chi_{\{A\}}]\in \Ho_i(\Cl(\Gamma-N_\Gamma(A));\ZZ)$$ making it nontrivial.  
   Therefore with $A$ as above $\kappa_\KK^i(\Gamma) \leq |N_\Gamma(A)| = \delta^i(\Gamma)$.
  
   It remains to verify the claim:
   Let $C \subseteq V$, $|C| = k < 2i$ and $v \in V \setminus C$.
   Then for a graph $\Gamma$ on vertex set $V$ we have
   $N_\Gamma(C) - N_\Gamma(v) = N_\Gamma(C) - N_\Gamma(\{v \} 
   \cup C)$. In particular, since $p_n$ is middling of 
   exponent $2i \geq k+1$ we have aas
   for all $\epsilon > 0$ that
   \begin{align*}
       |N_\Gamma(C) - N_\Gamma(v)| & = & |N_\Gamma(C)| - |N_\Gamma(\{v\} \cup C)| \\
       & \overset{\ref{prop:9}(i)}{\geq} & 
       n\,p_n^k(1-\epsilon) - n\, p_n^{k+1} (1+\epsilon) \\
       & = & n\,p_n^k(1-\epsilon-p_n-p_n\epsilon).
   \end{align*}
   Again by the fact that $p_n$ is middling
   it follows that for all 
   $1 > \epsilon > 0$ we have aas that 
   $|N_\Gamma(C) - N_\Gamma(v)| \geq n\,p_n^k(1-\epsilon) > 0$.

   Now let $A=\{a_0,\ldots a_i\}$ be an $i$-face of $\Cl(\Gamma)$. 
   We construct 
   $b_0,\ldots, b_i$ such that the induced subcomplex of
   $\Cl(\Gamma)$ on 
   $A\cup\{b_0\ldots b_i\}$ is the boundary complex
   of an $(i+1)$-dimensional cross polytope.
   We proceed by induction and assume that for $0 \leq j \leq i$ we have already constructed $b_0,\ldots, b_{j-1}$. Set 
   $C = (A-\{a_j\})\cup\{b_0\ldots b_{j-1}\})$. Then by
   the arguments above for $k = |C| = i+j < 2i$ aas there is
   a vertex $b_j \in N_\Gamma((A-\{a_j\})\cup 
   \{b_0,\ldots, b_{j-1}\}) - N_\Gamma(a_j)$. 
   
   It follows that the subcomplex of $\Cl(\Gamma)$ induced on 
    the vertex set $\{a_0,\ldots, a_i,b_0 ,\ldots, b_i\}$ is the boundary
   complex of an $(i+1)$-dimensional cross polytope.
\end{proof}

By \ref{lem:ki<=di} in order to prove \ref{thm:d1=k1} it suffices to show that aas 
$\delta^1(\Gamma) \leq 
\kappa_{\FF_2}^1 (\Gamma)$. 
We will do this in the next sections.

\subsection{Main argument and higher homological dimension}	
For the remainder of the paper we are working over $\FF_2$ and with simplicial
complexes $X=\Cl(\Gamma)$. For an $i$-simplex 
$F \in X[i]$ we write $\chi_{\{F\}}$ (resp. $\zeta_{\{F\}}$) for its characteristic $i$-cochain (resp. $i$-chain). For a
collection $A = \{ F_1,\ldots, F_r\}$ we write 
$\chi_A$ for $\sum_{i=1}^r \chi_{\{F_i\}}$ and
$\zeta_{\{A\}}$ for $\sum_{i=1}^r \zeta_{\{F_i\}}$.
Using this notation we can make the
following identifications
$$C^i(X;\FF_2)={\{\chi_A\, : \, A\subseteq X[i])\}}$$ and 
$$C_i(X;\FF_2)={\{\zeta_A\, : \, A\subseteq X[i])\}}.$$

What remains is to argue that aas deleting vertices $C\subseteq X[0]$ with
even one vertex fewer than the minimum number of common neighbors of any
$i$-face will leave a complex with trivial
$i$\textsuperscript{th} homology or equivalently 
$i$\textsuperscript{th} cohomology.  Our suggestion is to
consider the dual norms on cohomology and homology of a cell complex over
$\FF_2$ given by 

\begin{align}
   \label{cohomnorm}
     \big\|\,\Ho^i( \Cl(\Gamma);\FF_2)\,\big\|_c=   \min\big\{|A|\,:\, 0 \neq [\chi_A]\in \Ho^i(\Cl(\Gamma);\FF_2)\big\}
\end{align}
and 
\begin{align}
  \label{homnorm}
 \big\|\,\Ho_i( & \Cl(\Gamma);  \FF_2)\,\big\|_h= \\  \nonumber & \min\big\{\max_j|A_j|\,:\,\{[\zeta_{\{A_1\}}],\ldots, [\zeta_{\{A_r\}}]\}\, \hbox{ spans }\Ho_i(\Cl(\Gamma);\FF_2)\,\big\}.
\end{align}



The following conjecture lays out a plan for proving 
\ref{con:main} in general. We will verify this conjecture for
$i = 1$. 

\begin{Conjecture}
  \label{con:delicate}
  For all $i\in\NN$ and $\epsilon>0$ there exist
  $r_i,\ell_i,w_i \geq 0$ such that for all middling 
  $p:\NN\rightarrow [0,1]$ with exponent $w_i$ we have that 
  $$\lim_{n \rightarrow \infty} \Pr_{\Omega^{[n]}_{p_n}} ( \Gamma \textrm{ satisfies } (\ast) ) = 1,$$ where $(\ast)$ is the
  following condition:
  
  \smallskip
  
  \noindent $(\ast)$ For all $C\subseteq [n]$ with $|C|\leq n\,p_n^{i+1}(2-\epsilon)$ the following properties hold
  \begin{enumerate}
  \item $\|\Ho_i(\Cl(\Gamma-C);\FF_2)\|_h\leq\ell_i,$
  \item either 
    \begin{itemize}
        \item there is $v\in [n], N_\Gamma (v)\subseteq C$ or
        \item $\|\Ho^i(\Cl(\Gamma-C);\FF_2)\|_c\geq n^{i+1}p_n^{r_i}.$
    \end{itemize}
  \end{enumerate}
\end{Conjecture}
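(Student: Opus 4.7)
The plan is to treat the two properties by complementary arguments, both ultimately driven by the concentration estimates of \ref{prop:9}. Part (1) would follow by a straightening argument: an $i$-cycle of large support is reduced modulo boundaries to one of smaller support by using abundant $(i+1)$-simplices of $\Cl(\Gamma-C)$. Part (2) would follow by a propagation argument: a non-trivial cocycle with small support forces, outside of an isolated-vertex obstruction, enough linear constraints that it must in fact be a coboundary. The $i=1$ case flagged by the authors as verified is the benchmark I would attempt first, and I would try to push each step up one dimension at a time.

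For part (1), I would induct on the support size of a cycle. In the middling regime of exponent $w_i$ large enough, \ref{prop:9}(ii) guarantees that any two disjoint collections of $(i-1)$-faces of appropriate polynomial sizes share many common neighbors. Given a cycle $\zeta$ supported on $A\subseteq\Cl(\Gamma-C)[i]$ with $|A|>\ell_i$, I would look for an $(i-1)$-face $\sigma$ together with two vertices $v,w$ such that $\sigma\cup\{v,w\}\in\Cl(\Gamma-C)[i+1]$ and the boundary of $\sigma\cup\{v,w\}$ has odd intersection with $A$. Adding this boundary to $\zeta$ preserves $[\zeta]$ while strictly reducing the support, so iterating yields a representative of $[\zeta]$ of support at most $\ell_i$, and hence a generating set for $\Ho_i(\Cl(\Gamma-C);\FF_2)$ of norm at most $\ell_i$. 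For $i=1$ this specializes to the classical chording of long graph cycles by triangles.

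For part (2), I would argue by contradiction. Assume $\chi_A$ is of minimal support in its cohomology class, $[\chi_A]\neq 0$, $|A|<n^{i+1}p_n^{r_i}$, and no $v\in[n]$ satisfies $N_\Gamma(v)\subseteq C$. I would then try to construct an $(i-1)$-cochain $\eta$ with $d\eta=\chi_A$ by path integration: fix a base $(i-1)$-face and propagate $\eta$ across sequences of $(i+1)$-simplices whose boundaries avoid $A$. Because $|A|$ is much smaller than the number of $i$-faces and $(i+1)$-simplices of $\Cl(\Gamma-C)$, both controlled by \ref{prop:9}(i), the complement of $A$ is dense enough that any two $(i-1)$-faces are joined by many independent propagation paths. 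The absence of an isolated vertex rules out the only degenerate obstruction to global consistency, and \ref{prop:9}(ii) then forces the propagation to close up around every loop, giving $\chi_A=d\eta$ and contradicting $[\chi_A]\neq 0$.

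The main obstacle is the global consistency step in part (2). It is not enough to propagate $\eta$ locally; one has to show that every loop of $(i+1)$-simplices closes up to zero against $\chi_A$, uniformly over all $C$ with $|C|\leq np_n^{i+1}(2-\epsilon)$, over all minimal representatives $A$ of size less than $n^{i+1}p_n^{r_i}$, and over all propagation loops. Balancing the resulting union bound against the exponentially small probability provided by \ref{prop:9}(ii) is what pins down the constants $r_i,\ell_i,w_i$; even in the $i=1$ case handled by the authors this bookkeeping is delicate, and for $i\geq 2$ propagating the cocycle condition through higher cliques will likely require genuinely new combinatorial input, which is why the authors only commit to $i=1$.
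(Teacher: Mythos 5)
Your plan for part (2) has a genuine gap at exactly the point you flag: the ``global consistency'' of the path-integration. Saying that the propagation ``closes up around every loop, giving $\chi_A=d\eta$'' is not a step you can extract from density of simplices --- it \emph{is} the assertion $[\chi_A]=0$, i.e.\ the conclusion you are trying to force, and no amount of abundance of $(i+1)$-simplices avoiding $A$ supplies it (a sparse but homologically essential cocycle is precisely the object whose existence is at issue). Moreover the union bound you propose to run over all $C$ with $|C|\leq np_n^{i+1}(2-\epsilon)$, all small supports $A$, and all loops cannot be paid for by \ref{prop:9}(ii) as stated: that proposition is a single aas statement (the union over pairs of large vertex sets is already spent inside its proof), not a per-event exponential tail. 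The paper's $i=1$ argument avoids any such union bound and never tries to trivialize the class. From the minimality of the representative $E$ and the cocycle condition it first derives a parity constraint on triangles (every triangle of $\Gamma-C$ meets $E$ in an even number of edges), then produces a vertex $v\notin C$ with $|N_E(v)|\geq\frac14 np_n^2$, observes that the parity constraint forces $E$ to contain \emph{every} edge of $\Gamma$ between $A=N_E(v)$ and $B=N_{\Gamma-C}(v)\setminus N_E(v)$, uses minimality of $|E|$ to get $|B|\geq|A|$, and then the uniform bipartite bound \ref{prop:9}(ii) gives $|E|\geq\frac12 n^2p_n^4(1-\epsilon)$ outright. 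The lower bound on $\|\Ho^1\|_c$ is obtained positively from this cut structure; nothing is integrated.

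Part (1) is closer in spirit to the paper but your mechanism is off in two ways. Adding the boundary of an $(i+1)$-simplex whose $i$-faces have \emph{odd} intersection with the support does not shrink the support (over $\FF_2$ you need strictly more than half of its $i+2$ facets to lie in $A$), and in any case $\|\cdot\|_h$ does not ask for a small representative of each class --- a chord splits a long cycle into two shorter ones whose classes \emph{sum} to the original, so the right statement is that short cycles span, not that supports can be shrunk. You also leave open how to make the chord-finding uniform over the superpolynomially many choices of $C$; the paper does this with a single graph parameter: aas $\tau(\Gamma)>\frac32 np_n^2$, so after deleting any admissible $C$ at most one pair of vertices has distance $>2$, hence every cycle of length $\geq 6$ has a chord and $\Ho_1(\Cl(\Gamma-C);\FF_2)$ is spanned by cycles of length at most $5$ (giving $\ell_1=5$), with the needed estimates coming from \ref{prop:9}(i) applied to two vertex pairs, not from \ref{prop:9}(ii) applied to $(i-1)$-faces. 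Finally, note that the statement is only a conjecture for $i\geq 2$ and the paper proves just the $i=1$ case, so a correct write-up on your plan would at minimum have to carry out the $i=1$ bookkeeping completely, including a concrete use of the hypothesis that no $N_\Gamma(v)\subseteq C$, which your sketch never actually invokes.
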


The next two definitions and two lemmas will be used to relate
  \ref{con:delicate} to \ref{con:main}.

\begin{Definition} ~ 
   \begin{itemize}
       \item[(i)] 
   Let $X$ be a simplicial complex and 
   $\gamma\in \Ho^i(X;\FF_2)$. A set
   $\K=\{\zeta_{A_1},\ldots, \zeta_{A_r}\}\subseteq Z_i(X;\FF_2)$ of disjoint cycles
   with $|A_1|,\ldots, |A_r|\leq \ell$, each pairing to $1$ with $\gamma$ is called $\ell$-adapted to $\gamma$. 
   We set $$k^i_\ell(X) = \min_{0 \neq \gamma\in\Ho^i(X;\FF_2)} \max\big\{\,|\K|\ :\ \K\text{ is $\ell$-adapted to }\gamma\big\},$$ where we set $k^i_\ell(X)=\infty$ if $\Ho^i(X;\FF_2)=0$.  
   \item[(ii)] For a graph $\Gamma$ we set 
   $$k^i_\ell(\Gamma) = \min_{\genfrac{}{}{0pt}{}{C\subseteq V(\Gamma)}{|C|<\delta^i(\Gamma)}} k^i_\ell(\Cl(\Gamma-C)).$$
   \end{itemize}
\end{Definition}

\begin{lem}
  \label{many}
  If \ref{con:delicate} holds and $p:\NN\rightarrow [0,1]$ is middling
  of exponent $i+1$ and $w_i$ then for every $t$ one has
  $$\lim_{n\rightarrow \infty}\Pr_{\Gamma\in\Omega^{[n]}_{p_n}}\Big(k^i_{\ell_i}(\Gamma) \geq p_n^{-t}\Big)=1.$$
\end{lem}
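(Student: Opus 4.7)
The plan is iterative. Given a nonzero $\gamma\in\Ho^i(\Cl(\Gamma-C);\FF_2)$, I build disjoint $i$-cycles $\zeta_1,\zeta_2,\ldots$, each of support at most $\ell_i$ and pairing to $1$ with $\gamma$, one per stage, and show that at least $p_n^{-t}$ can be produced; this exhibits the required $\ell_i$-adapted collection and witnesses $k^i_{\ell_i}(\Cl(\Gamma-C))\geq p_n^{-t}$. First I fix $t$, pick $\epsilon>0$ small and take $r_i,\ell_i,w_i$ from \ref{con:delicate}; the middling exponent is assumed to be at least $\max(w_i,\,i+1,\,r_i-i+t+1)$, which the hypothesis can absorb. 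By \ref{prop:9}(i) and \ref{con:delicate}, aas the graph $\Gamma$ satisfies $\delta^i(\Gamma)\leq np_n^{i+1}(1+\epsilon)$, $\delta^0(\Gamma)\geq np_n(1-\epsilon)>np_n^{i+1}(2-\epsilon)$ (for $i\geq 1$ and $n$ large), and for every $C\subseteq[n]$ with $|C|\leq np_n^{i+1}(2-\epsilon)$,
$$\|\Ho_i(\Cl(\Gamma-C);\FF_2)\|_h\leq\ell_i \quad\text{and}\quad \|\Ho^i(\Cl(\Gamma-C);\FF_2)\|_c\geq n^{i+1}p_n^{r_i},$$
the cocycle-norm bound coming from the second alternative of \ref{con:delicate}(2), since the first alternative $N_\Gamma(v)\subseteq C$ is ruled out by $|C|<\delta^0(\Gamma)$.

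Fix such a $\Gamma$, a $C$ with $|C|<\delta^i(\Gamma)$, and a nonzero $\gamma$. At stage $k+1$, with disjoint $\zeta_1,\ldots,\zeta_k$ of supports $A_1,\ldots,A_k$, I set $S_k=\bigcup_j A_j$, pick a vertex $v_F\in F$ for each $F\in S_k$, and let $C_k=C\cup\{v_F:F\in S_k\}$. Whenever $k\leq p_n^{-t}$, the middling assumption forces $|C_k|\leq np_n^{i+1}(1+\epsilon)+k\ell_i\leq np_n^{i+1}(2-\epsilon)$, so \ref{con:delicate} applies at $C_k$. Write $\gamma_k$ for the image of $\gamma$ under $\Ho^i(\Cl(\Gamma-C);\FF_2)\to\Ho^i(\Cl(\Gamma-C_k);\FF_2)$. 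If $\gamma_k\neq 0$, the bound $\|\Ho_i(\Cl(\Gamma-C_k);\FF_2)\|_h\leq\ell_i$ supplies a basis of small cycles; at least one basis element $\zeta_{k+1}$ pairs to $1$ with $\gamma_k$ and hence with $\gamma$, and since it lives in $\Cl(\Gamma-C_k)[i]$ its support avoids every $v_F$ and therefore $S_k$, so it extends the disjoint collection.

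If instead $\gamma_k=0$, then a cochain chase --- subtract from any representative $\chi_A$ of $\gamma$ the extension-by-zero of a coboundary trivializing $\chi_A|_{\Cl(\Gamma-C_k)}$ --- yields a representative of $\gamma$ supported only on $i$-simplices of $\Cl(\Gamma-C)$ containing some $v_F$. By \ref{prop:9}(i) the count of such simplices is at most $|S_k|\cdot\max_F\binom{|N_\Gamma(v_F)|}{i}\leq k\,\ell_i(np_n)^i(1+o(1))$, and confronting this with $\|\Ho^i\|_c\geq n^{i+1}p_n^{r_i}$ forces
$$k\geq\frac{n^{i+1}p_n^{r_i}}{\ell_i(np_n)^i}\bigl(1-o(1)\bigr)=\frac{np_n^{r_i-i}}{\ell_i}\bigl(1-o(1)\bigr)\geq p_n^{-t},$$
using the middling exponent $w>r_i-i+t$. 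So either stage $k+1$ adds a new cycle or the iteration has already certified $k\geq p_n^{-t}$, and the conclusion follows.

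The hard part will be the cochain chase in the vanishing case together with the parameter bookkeeping: verifying that the extension-by-zero construction really does concentrate a representative on simplices meeting $\{v_F\}$, and simultaneously calibrating $\epsilon$, $r_i$, $\ell_i$ and the middling exponent so that \ref{con:delicate} continues to apply throughout all $p_n^{-t}$ stages and the cocycle-norm inequality in the vanishing case dominates $p_n^{-t}$. The required slack is furnished by taking the middling exponent sufficiently large in terms of $t$.
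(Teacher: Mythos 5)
Your proposal is essentially the paper's own argument, recast as a greedy iteration instead of a contradiction with a maximal adapted family: the same three ingredients appear --- the size check that $C$ together with the vertices stripped from the current cycles stays below $np_n^{i+1}(2-\epsilon)$ so that \ref{con:delicate} applies, part (1) of \ref{con:delicate} to extract a new short cycle pairing to $1$ with the restricted class when that class survives, and, when it dies, a representative of $\gamma$ supported on $i$-simplices meeting the deleted vertices, counted against the cocycle-norm lower bound of \ref{con:delicate}(2). Your cochain chase is correct, your count $k\,\ell_i\,(np_n)^i$ is in fact sharper than the paper's $|D|\binom{n}{i}$, and you are more careful than the paper in ruling out the alternative $N_\Gamma(v)\subseteq C$ via $\delta^0$.

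Two caveats. First, you delete only one vertex $v_F$ per simplex of the current supports, so the cycle produced at each stage is guaranteed only to share no $i$-simplex with the earlier ones, not to be vertex-disjoint from them; the paper deletes the entire vertex set $D$ of the family, and vertex-disjointness is what the notion of an $\ell$-adapted family is used for later (in \ref{pair}(ii) and \ref{prop:conthm} the bound $|N_\Gamma(A\cup B)|\leq n\,p_n^{2a}(1+\epsilon)$ needs the supports $A$ and $B$ to be disjoint as vertex sets). The repair is immediate --- put all vertices of $S_k$ into $C_k$, at the cost of a factor $(i+1)$ in the counting --- but as written your collection need not be adapted in the intended sense. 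Second, your requirement that the middling exponent be at least $r_i-i+t+1$ (and, implicitly, at least $i+1+t$ for the size check on $C_k$, which your stated maximum does not automatically cover) strengthens the hypothesis ``middling of exponent $i+1$ and $w_i$'', and ``the hypothesis can absorb'' this is an assertion, not an argument. The paper's proof has the same tension (it needs $np_n^{i+1+t}$ and $np_n^{r_i+t}$ to be large and hides this behind ``without loss of generality $t\leq i+1$''), so this is inherited looseness rather than an error peculiar to you, but it should be recorded as a condition on $w_i$ rather than waved away.
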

\begin{proof}
  Assume for contradiction that for some $\epsilon >0$ we have
  $$\Pr_{\Gamma\in \Omega_{p_n}^{[n]}}(k_{\ell_i}^i(\Gamma) < p_n^{-t}) \leq 1-\epsilon$$
  for infinitely many $n$ and assume without loss of generality that 
  $t\leq i+1$.
  For each such $n$ and each graph $\Gamma$ with 
  $V(\Gamma) = [n]$ for which 
 $$k_{\ell_i}^i(\Gamma) < p_n^{-t}$$ choose
  a maximal $\K$ that is $\ell_i$-adapted to some
  $$0 \neq \gamma\in \Ho^i(\Cl(\Gamma-C);\FF_2)$$ with $|C|<\delta^i(\Gamma)$. Thus we have $|\K| =k_{\ell_i}^i(\Gamma)< p_n^{-t}$. 
  Now write 
  $$D=\bigcup_{\zeta_F\in\K} \, \bigcup_{A\in F}A\subseteq V(\Gamma)$$
  for the set of all the vertices involved in $\K$.  
  Simple counting shows $|D|\leq (i+1)\,\ell_i\,p_n^{-t}$. By \ref{prop:9}(i)
  with exponent $i+1$ we
  have aas that $\delta^i(\Gamma) \leq \,n\,p_n^{i+1}$ and hence
  $|C\cup D|<np_n^{i+1}(2-\frac{1}{2})$. By \ref{con:delicate}(1) for
  $\epsilon = \frac{1}{2}$ it follows that
  $$\|\Ho_i(\Cl(\Gamma-(C \cup D));\FF_2)\|_h\leq \ell_i.$$  
  We denote by $$j:\Cl(\Gamma-(C \cup D))\rightarrow \Cl(\Gamma-C)$$ the natural injection and by $j^\ast$ the map induced by
  $j$ in cohomology. Then if
  $$0\neq j^*(\gamma)\in\Ho^i(\Cl(\Gamma-(C\cup D));\FF_2)$$ 
  then there must be some
  $$\zeta_F\in Z_i(\Cl(\Gamma-(C \cup D));\FF_2)$$ with $|F|\leq \ell_i$ and
  $([\zeta_F],j^*(\gamma))=1$ contradicting the maximality of $\K$.
  Finally consider the case $j^*(\gamma)=0$. Then 
  $\gamma = [\chi_E] + [\partial \chi_{E'}] = [\chi_E]$ for an $i$-cocycle $\chi_E$ and an $(i-1)$-chain $\chi_{E'}$
  such that  
  $E$ is a is set of $i$-simplices
  all containing a vertex from $D$ and $E'$ is a set of $(i+1)$-simplices supported on 
  $V(\Gamma) - (C \cup D)$.
  It follows that 
  $||H^i(\Gamma-(C-D);\FF_2)||_c$
  is bounded from above by the number of $i$-simplices in $\Gamma-C$ with a least one vertex in $D$. A rough counting argument then shows that 
  $||H^i(\Gamma-(C-D);\FF_2)||_c
  \leq |D|\,\binom{n} {i} \leq (i+1)\,\ell_i\,p_n^{-t}\,n^i< n^{i+1}p_n^{r_i}$. But this 
  contradicts \ref{con:delicate}(2).  
\end{proof}

\begin{lem}\label{pair}
      For every graph $\Gamma$ and numbers $i  \geq 0$ and $\ell \geq 1$  
      at least one of the following holds:
      \begin{itemize}
      \item[(i)] There is  $A \subseteq V(\Gamma)$ with
        $|A|\leq\ell$ such that
          \begin{eqnarray*}
       \delta^i(\Gamma)& \geq    & \frac{1}{2\ell}\,|N_\Gamma(A)|\,k^i_\ell(\Gamma)
          \end{eqnarray*}
          \item[(ii)] There are disjoint $A,B \subseteq V(\Gamma)$ with 
            $|B|=|A|\leq \ell$ such that
          \begin{eqnarray*}
          \delta^i(\Gamma) & \geq & \frac{|N_\Gamma (A)|^2}{|N_\Gamma (A\cup B)|}.
          \end{eqnarray*}
          \item[(iii)] $\kappa_{\FF_2}^i(\Gamma) \geq \delta^i(\Gamma)$.
      \end{itemize}
      \end{lem}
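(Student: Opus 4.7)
I argue by contrapositive: assume (iii) fails, i.e.\ $\kappa^i_{\FF_2}(\Gamma) < \delta^i(\Gamma)$, and derive (i) or (ii). The failure of (iii) and the definition of $k^i_\ell(\Gamma)$ provide $C \subseteq V(\Gamma)$ with $|C| < \delta^i(\Gamma)$, a nontrivial class $\gamma \in \Ho^i(\Cl(\Gamma-C); \FF_2)$, and a maximum $\ell$-adapted set $\K = \{\zeta_{A_1}, \ldots, \zeta_{A_r}\}$ to $\gamma$ with $r = k^i_\ell(\Gamma)$. Write $D = \bigcup_s V(A_s)$, so $|D| \leq (i+1)\ell r$. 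If $r \leq 2\ell$ (assuming $\ell \geq i+1$), take $A$ to be the vertex set of an $i$-simplex achieving the minimum in the definition of $\delta^i(\Gamma)$, so $|A|=i+1\leq \ell$ and $|N_\Gamma(A)| = \delta^i(\Gamma)$; then (i) follows from $\delta^i(\Gamma) \geq \delta^i(\Gamma)\cdot k^i_\ell(\Gamma)/(2\ell)$. So assume $r > 2\ell$.

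The key in the remaining case is the maximality of $\K$: no $i$-cycle $\zeta_F \in Z_i(\Cl(\Gamma-C); \FF_2)$ with $|F| \leq \ell$, $F$ disjoint from $\bigcup_s A_s$, pairing to $1$ with $\gamma$, exists. I plan to attempt to construct such $\zeta_F$ as the boundary of a cross-polytope on $2(i+1)$ fresh vertices of $V(\Gamma) - C - D$, mimicking the proof of \ref{lem:ki<=di}: starting from a fresh $i$-simplex $I = \{a_0, \ldots, a_i\}$, sequentially select $b_j \in N_\Gamma((I - \{a_j\}) \cup \{b_0, \ldots, b_{j-1}\}) \setminus (N_\Gamma(a_j) \cup C \cup D)$. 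If the construction \emph{and} the non-trivial pairing with $\gamma$ both succeed, maximality of $\K$ is contradicted; so there must always be an obstruction, producing two complementary cases.

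In Case (A) (\emph{geometric failure}) the inductive construction stalls at some step $j$, so $N_\Gamma(A) \setminus N_\Gamma(a_j) \subseteq C \cup D$ for $A = (I - \{a_j\}) \cup \{b_0, \ldots, b_{j-1}\}$. Using $|N_\Gamma(A) \setminus N_\Gamma(a_j)| = |N_\Gamma(A)| - |N_\Gamma(A \cup \{a_j\})|$ together with $|C \cup D| \leq \delta^i(\Gamma) + (i+1)\ell r$, and padding $\{a_j\}$ to a disjoint set $B$ with $|B| = |A|$ via vertices of $N_\Gamma(A \cup \{a_j\})$ (so that $|N_\Gamma(A \cup B)|$ remains comparable to $|N_\Gamma(A \cup \{a_j\})|$), this rearranges into the inequality (ii). In Case (B) (\emph{pairing failure}) the cross-polytope is always constructible, but its boundary cycle pairs to $0$ with $\gamma$ for every fresh choice of $I$. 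Since such cycles span enough of the cohomology of the fresh region to make the pairing faithful, this forces the restriction $j^*(\gamma)$ under $j\colon \Cl(\Gamma-C-D)\hookrightarrow \Cl(\Gamma-C)$ to vanish. Hence $\gamma$ has a cocycle representative $\chi_E$ with every simplex of $E$ incident to $D$. Each $|E \cap A_s|$ being odd gives $|E| \geq r = k^i_\ell(\Gamma)$, and a double count of $(s, I)$-pairs with $I \in E \cap A_s$, pivoting on a vertex $v \in D$ with large $|N_\Gamma(\{v\})|$ and using the bound $|A_s| \leq \ell$ to cap each $s$'s contribution, extracts (i) with an appropriate small $A \subseteq D$.

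The main obstacle is Case (B): justifying that fresh cross-polytope cycles span enough of the cohomology of the fresh region for the pairing argument to go through, and then extracting the exact inequality of (i) via a careful double count. The factor $1/(2\ell)$ of (i) is expected to emerge naturally from the $\ell$-bound on $|A_s|$ via pigeonhole on $(s, I)$-incidences. Case (A) is a routine manipulation of common-neighborhood sizes.
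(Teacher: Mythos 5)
Your proposal does not establish the lemma, and the central difficulty is that \ref{pair} is a purely deterministic statement about \emph{every} graph $\Gamma$, while your main case ($r>2\ell$) is built around trying to grow a fresh cross-polytope cycle as in the proof of \ref{lem:ki<=di} --- an argument that is only available asymptotically almost surely in the middling regime, not for an arbitrary graph. You compensate by case-splitting on the ``obstruction,'' but neither case is actually closed. In Case (A) the stalling condition only gives $|N_\Gamma(A)|-|N_\Gamma(A\cup\{a_j\})|\leq|C\cup D|\leq \delta^i(\Gamma)+(i+1)\,\ell\, k^i_\ell(\Gamma)$, and the term $(i+1)\,\ell\,k^i_\ell(\Gamma)$ is exactly the quantity that is enormous in the intended application and appears nowhere in (ii); moreover ``padding'' $\{a_j\}$ to a set $B$ with $|B|=|A|$ only \emph{shrinks} $N_\Gamma(A\cup B)$, which makes the right-hand side of (ii) larger, so the claimed comparability is unjustified and the rearrangement into (ii) does not go through. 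In Case (B) the assertion that fresh cross-polytope boundaries ``span enough of the cohomology of the fresh region'' to force $j^*(\gamma)=0$ is unsupported (and false for general graphs), and even granting it, the double count that is supposed to produce the precise inequality $\delta^i(\Gamma)\geq\frac{1}{2\ell}|N_\Gamma(A)|\,k^i_\ell(\Gamma)$ with $|A|\leq\ell$ is not carried out --- you acknowledge both points as open. A further, smaller issue: your base case $r\leq 2\ell$ needs $\ell\geq i+1$ (to take $A$ an $i$-simplex with $|A|=i+1\leq\ell$), which is not a hypothesis of the lemma.

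What your attempt misses is the single observation that makes the paper's proof short and deterministic: for each cycle $\zeta_A\in\K$ adapted to $\gamma\in\Ho^i(\Cl(\Gamma-C);\FF_2)$, every common neighbor of the vertices in the support of $\zeta_A$ must lie in $C$, since a common neighbor outside $C$ would be a cone point killing $[\zeta_A]$ in $\Cl(\Gamma-C)$ and hence contradict $([\zeta_A],\gamma)=1$. Thus $C$ contains the union $U=\bigcup N_\Gamma(\cdot)$ of the common neighborhoods of at least $k/\ell$ cycles of a common support size (pigeonhole over the at most $\ell$ sizes), each neighborhood of size at least $a$ and with pairwise intersections $N_\Gamma(A\cup B)$ of size at most $b$; a standard union/inclusion--exclusion bound gives $|U|\geq\min\{\frac{ak}{2\ell},\frac{a^2}{2b}\}$, and $\delta^i(\Gamma)>\kappa^i_{\FF_2}(\Gamma)=|C|\geq|U|$ then yields (i) or (ii) directly, with no auxiliary construction, no probabilistic input, and no maximality-contradiction argument. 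I recommend rebuilding the proof around that cone-point step rather than around the cross-polytope machinery, which belongs to \ref{lem:ki<=di} and \ref{many}, not here.
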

    \begin{proof} It suffices to show that (i) or (ii) holds in case $\kappa_{\FF_2}^i(\Gamma) < \delta^i(\Gamma)$. 
      Choose $C\subseteq V(\Gamma)$ with $|C|=\kappa_{\FF_2}^i(\Gamma)$ and
      $\K$ a set of $k=k_\ell^i(\Gamma)$ cycles $\ell$-adapted to
      $0 \neq \gamma=[\chi_E]\in \Ho^i(\Gamma;\FF_2)$. 
    Choose $\ell'\leq\ell$
    such that the set $\K'$ of $\ell'$-cycles in $\K$ 
    satisfies $|\K'|\geq \frac{k}{\ell}$. Set $U=\cup_{A\in \K'}
    N_\Gamma (A)$.
    Note that $U\subseteq C$ since any $u\in U-C$ would be a cone point for
    a cycle in $\K$ contradicting its nontrivial pairing with $\gamma$.  
        Set \begin{eqnarray*}
      a & = & \min_{A\in \K'} |N_\Gamma (A)| \\
      b & = & \max_{A,B\in\K'}|N_\Gamma(A\cup B)|.
    \end{eqnarray*} 
        Then $$\delta^i(\Gamma)> \kappa_{\FF_2}^i(\Gamma) =   |C| \geq |U|\geq\min\Big\{\frac{ak}{2\ell},\frac{a^2}{2b}\Big\}$$
        with the last inequality holding for the union of any collection of
        at least $\frac{k}{\ell}$ sets each with size at least $a$ and
        pairwise intersections of size at most $b$.  The lemma follows.  
              \end{proof}

    \begin{prop}
    \label{prop:conthm}
  \ref{con:delicate}(i) implies \ref{con:main}(i) with exponent $W_i= \max\{ w_i, 2\ell_i, 2i\}$. 
  \end{prop}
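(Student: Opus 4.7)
The plan is to combine the three lemmas already established in this section (\ref{lem:ki<=di}, \ref{many}, and \ref{pair}) with the uniform neighborhood estimate \ref{prop:9}(i). By \ref{lem:ki<=di}, applicable since $W_i \geq 2i$, we have $\kappa^i_{\FF_2}(\Gamma) \leq \delta^i(\Gamma)$ aas, so it remains to establish the reverse inequality aas. Assume \ref{con:delicate}(i) holds with constants $r_i,\ell_i,w_i$, and fix a small $\epsilon>0$ together with a real $t > \ell_i$.

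Two preparatory aas events will be used. First, by \ref{many} (applicable since $W_i \geq w_i$ and $W_i \geq \max\{2i,2\ell_i\} \geq i+1$), we have $k^i_{\ell_i}(\Gamma) \geq p_n^{-t}$ aas. Second, by \ref{prop:9}(i) (applicable since $W_i \geq 2\ell_i$), we have aas that $|N_\Gamma(X)| = np_n^{|X|}(1 \pm \epsilon)$ for every vertex subset $X\subseteq[n]$ with $|X|\leq 2\ell_i$; in particular $\delta^i(\Gamma) \leq np_n^{i+1}(1+\epsilon)$. On the intersection of these aas events, suppose for contradiction that $\kappa^i_{\FF_2}(\Gamma) < \delta^i(\Gamma)$. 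Then \ref{pair} with $\ell = \ell_i$ puts us in case (i) or case (ii).

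In case (i) there is a vertex set $A$ of size $|A| \leq \ell_i$ with
$$np_n^{i+1}(1+\epsilon) \;\geq\; \delta^i(\Gamma) \;\geq\; \frac{|N_\Gamma(A)|\,k^i_{\ell_i}(\Gamma)}{2\ell_i} \;\geq\; \frac{np_n^{|A|}(1-\epsilon)\,p_n^{-t}}{2\ell_i},$$
which rearranges to $p_n^{|A|-t-(i+1)} \leq \tfrac{2\ell_i(1+\epsilon)}{1-\epsilon}$. Since $|A|\leq \ell_i < t$, the exponent on the left is strictly negative, so the left side tends to $\infty$ as $p_n\to 0$, a contradiction. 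In case (ii) there are disjoint vertex sets $A,B$ with $|A|=|B|\leq \ell_i$, and using $|A\cup B| = 2|A| \leq 2\ell_i$,
$$np_n^{i+1}(1+\epsilon) \;\geq\; \delta^i(\Gamma) \;\geq\; \frac{|N_\Gamma(A)|^2}{|N_\Gamma(A\cup B)|} \;\geq\; \frac{\bigl(np_n^{|A|}(1-\epsilon)\bigr)^2}{np_n^{2|A|}(1+\epsilon)} \;=\; \frac{n(1-\epsilon)^2}{1+\epsilon},$$
which forces $p_n^{i+1}$ to be bounded below by a positive constant, contradicting $p_n \to 0$ (since $i+1\geq 1$). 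Hence aas both cases are impossible, so aas $\kappa^i_{\FF_2}(\Gamma) \geq \delta^i(\Gamma)$, which together with \ref{lem:ki<=di} yields equality aas.

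The argument is essentially bookkeeping and there is no deep obstacle once the preceding lemmas are in hand; the main point is that case (ii) of \ref{pair} would demand $\delta^i \gtrsim n$, vastly exceeding the middling value $np_n^{i+1}$, while case (i) is killed by the super-polynomial lower bound on $k^i_{\ell_i}(\Gamma)$ supplied by \ref{many}. The one subtlety is to verify that $W_i = \max\{w_i,2\ell_i,2i\}$ is simultaneously large enough for all three inputs: \ref{lem:ki<=di} needs exponent $2i$, \ref{many} needs exponent $\max\{i+1,w_i\}$, and the neighborhood estimate must hold for vertex subsets of size up to $2\ell_i$.
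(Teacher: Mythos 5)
Your proof is correct and follows essentially the same route as the paper's: reduce to showing $\delta^i\leq\kappa^i_{\FF_2}$ aas via \ref{lem:ki<=di}, bound $\delta^i(\Gamma)\leq np_n^{i+1}(1+\epsilon)$ by \ref{prop:9}(i), and rule out cases (i) and (ii) of \ref{pair} using \ref{many} (with $t>\ell_i$) and the neighborhood estimates for sets of size up to $2\ell_i$, respectively. The only difference is cosmetic bookkeeping in how the contradictory inequalities are arranged.
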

\begin{proof}
  We already know from \ref{lem:ki<=di} that in order to prove
  \ref{con:main} for any $W_i\geq 2i$ it suffices to show that aas 
  $\delta^i(\Gamma) \leq \kappa_{\FF_2}^i (\Gamma)$.

  Assume for contradiction that for some $\epsilon >0$ there are 
  infinitely many $n$ such that
  $$\Pr_{\Gamma\in\Omega^{[n]}_{p_n}}\Big(\kappa^i_{\FF_2}(\Gamma)\geq\delta^i(\Gamma)\Big) \leq 1-\epsilon.$$
  
  By \ref{prop:9}(i) and $W_i\geq i+1$ one has aas
  \begin{eqnarray} \label{eq:0}
  \delta^i(\Gamma) & \leq & n\,p_n^{i+1}\,(1+\epsilon).
  \end{eqnarray}
  
  For a graph $\Gamma$ with $\delta^i(\Gamma) > \kappa_{\FF_2}^i(\Gamma)$
  and $\ell = \ell_i$ let $A$ be as in \ref{pair}(i) or $A$ and $B$
  as in \ref{pair}(ii) with $|A|=a\leq\ell$.  
  By \ref{prop:9}(i) and $W_i\geq \ell_i$ we
  have aas that 
  \begin{eqnarray}
     \label{eq:a}
  |N_\Gamma (A)|& \geq &  n\,p_n^{a}\,(1-\epsilon)
  \end{eqnarray} and similarly since $W_i\geq 2\ell_i$ aas
  \begin{eqnarray}
     \label{eq:b}
   |N_\Gamma (A\cup B)|&\leq & n\,p_n^{2a}\,(1+\epsilon).
  \end{eqnarray}
  
  In the case \ref{pair}(i) one has aas the contradiction with the last
  inequality using that $W_i\geq w_i$
  \begin{eqnarray*} 
    n & > &    n \, p_n^{i+1} \,(1+\epsilon) \\
    & \overset{\eqref{eq:0}}{\geq} &  \delta^i(\Gamma) \\ 
  &\overset{\ref{pair}(i)}{\geq} &
  \frac{1}{2 \ell_i} |N_\Gamma(A)| \, k_{\ell_i}^i(\Gamma)\\
  &\overset{\eqref{eq:a}}{\geq}& \frac{1}{2\ell_i} \,n\, p_n^a \, k_{\ell_i}^i(\Gamma)(1-\epsilon) \\ &\geq &
  \frac{1}{2\ell_i}\,n\, p_n^{\ell_i} \, k_{\ell_i}^i(\Gamma)(1-\epsilon) \\
  & \overset{\ref{many}}{\geq} & n.
  \end{eqnarray*}
  
  To complete the proof in the case \ref{pair}(ii) one has aas the contradiction
  \begin{eqnarray*}
    \frac{n}{2} & > & n\, p_n^{i+1}\,(1+\epsilon) \\
    & \overset{\eqref{eq:0}}{\geq} & \delta^i(\Gamma) \\ & \overset{\ref{pair}(ii)}{\geq} & 
    \frac{|N_\Gamma(A)|^2}{|N_\Gamma(A\cup B)|} \\
    & \overset{\eqref{eq:a},\eqref{eq:b}}{\geq} & 
    \frac{n^2\, p_n^{2a}\,(1-\epsilon)^2}{
    n\,p_n^{2a} \,(1+\epsilon)} \\
    & = & \frac{(1-\epsilon)^2}{1+\epsilon} n.
    \end{eqnarray*}
  
  \end{proof}

In the sections \ref{sec:cor41.1} and \ref{sec:cor41.2}
we will prove \ref{con:delicate}
for $i =1$ and hence \ref{thm:d1=k1}.  
The rest of this section is an aside on a 
feature of middling probabilities which we consider 
as a promising tool for handling the case $i>1$.

We observe that the middling property of probabilities
$p : \NN \rightarrow [0,1]$ is inherited by links.
More precisely, for a graph $\Gamma = ([n],E(\Gamma))$ and a subset $A \subseteq [n]$ of its vertices, the (graph theoretic) link of $A$ in $\Gamma$ is $\Gamma|_{N_\Gamma(A)}$.
It is easily seen that when $A$ is a clique then $\Cl(\Gamma|_{N_\Gamma(A)})$ is
isomorphic to the simplicial link of $A$ in $\Cl(\Gamma)$. 

 Let $p:\NN\rightarrow [0,1]$ be middling and $i$ an non-negative integer. 
 Define $q:\NN\rightarrow [0,1]$ by 
 $q_{m,i}=\min\{\,p_n\,:\,np^i_n\leq m\,\}$.
 Define two probability measures on the set of graphs $\Gamma$ with vertex set $V(\Gamma)$ contained in $[n]$:
 $$\mu_{n,i}(\Gamma)=\binom{n}{i}^{-1}\sum_{A\in\binom{[n]}{i}}\Pr_{\Gamma'\in\Omega^{[n]}_{p_n}}(\Gamma'|_{N_\Gamma'(A)}=\Gamma),$$
  $$\nu_{n,i}(\Gamma)=p_n^{i\,|V(\Gamma)|}\,(1-p_n^i)^{|V(\Gamma)|}\,\Pr_{\Omega_{q_{|V(\Gamma)|}}^{V(\Gamma)}}(\Gamma).$$
The total variation distance between these approaches zero.  

\begin{lem}
  For every $p$ middling and positive $i$ one has
  $$\lim_{n\rightarrow \infty}\sum_{\Gamma=([n],E)}\big|\mu_{n,i}(\Gamma)-\nu_{n,i}(\Gamma)\big|=0.$$
  \end{lem}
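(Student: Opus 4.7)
The plan is to obtain an exact product formula for $\mu_{n,i}(\Gamma)$, compare it pointwise with $\nu_{n,i}(\Gamma)$, and bound the $\ell^1$ distance by splitting the sum according to $m=|V(\Gamma)|$.

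First I would unpack $\mu_{n,i}(\Gamma)$. Fix $\Gamma$ on $V=V(\Gamma)\subseteq[n]$ with $|V|=m$. The event $\Gamma'|_{N_{\Gamma'}(A)}=\Gamma$ forces $A\cap V=\emptyset$ and then splits into three pieces on disjoint edge sets of $\Gamma'$: the $im$ edges between $V$ and $A$ are all present (probability $p_n^{im}$), every one of the $n-i-m$ vertices outside $A\cup V$ misses at least one vertex of $A$ (probability $(1-p_n^i)^{n-i-m}$), and the induced subgraph of $\Gamma'$ on $V$ equals $\Gamma$ (probability $\Pr_{\Omega^V_{p_n}}(\Gamma)$). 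Summing over the $\binom{n-m}{i}$ admissible choices of $A$ yields
\[\mu_{n,i}(\Gamma)=\frac{\binom{n-m}{i}}{\binom{n}{i}}\,p_n^{im}\,(1-p_n^i)^{n-i-m}\,\Pr_{\Omega^V_{p_n}}(\Gamma).\]

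The ratio $\mu_{n,i}(\Gamma)/\nu_{n,i}(\Gamma)$ then factors into three corrections, each of which I would show tends uniformly to $1$ on the ``typical'' scale $m\asymp np_n^i$: a combinatorial factor $\binom{n-m}{i}/\binom{n}{i}=\prod_{j=0}^{i-1}(1-m/(n-j))=1+o(1)$ since $m/n\to 0$; an analytic factor coming from the mismatch in the $(1-p_n^i)$-power, which is $1+o(1)$ on this scale; and a factor $\Pr_{\Omega^V_{p_n}}(\Gamma)/\Pr_{\Omega^V_{q_{m,i}}}(\Gamma)$ comparing two Erd\H{o}s--R\'enyi measures, whose logarithm can be bounded by $\binom{m}{2}\bigl(|\log(p_n/q_{m,i})|+O(p_n)\bigr)$ and hence is $o(1)$ provided $q_{m,i}/p_n\to 1$. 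To assemble the $\ell^1$ bound, I would split $\sum_\Gamma|\mu_{n,i}(\Gamma)-\nu_{n,i}(\Gamma)|$ by whether $m$ lies in a fixed window $[(1-\epsilon)np_n^i,(1+\epsilon)np_n^i]$. The atypical contribution is $o(1)$ by Chernoff concentration of $|N_{\Gamma'}(A)|$ around $np_n^i$ (cf.\ \ref{thm:CH} or \ref{prop:9}(i)), applied separately to $\mu_{n,i}$ and $\nu_{n,i}$. The typical contribution is $o(1)$ by the uniform pointwise closeness above, together with $\sum_\Gamma\mu_{n,i}(\Gamma)=1$.

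The main obstacle is the third correction factor: showing that $p_n$ and $q_{m,i}$ agree to first order on typical $m$. This is precisely where the middling hypothesis is used---it guarantees that $p_n$ varies sufficiently slowly in $n$ that the implicit $n'$ defined by $n'p_{n'}^i\approx m=np_n^i$ satisfies $p_{n'}=q_{m,i}=p_n(1+o(1))$, so that the two Erd\H{o}s--R\'enyi measures on $V$ are close in ratio.
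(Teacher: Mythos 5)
The paper states this lemma without any proof (it appears only as an aside motivating the case $i>1$), so there is no argument of the authors to compare yours against; judged on its own, your sketch has a genuine gap exactly at the point you yourself flag as ``the main obstacle,'' and the route you propose for closing it does not work. A preliminary issue of bookkeeping: with $\nu_{n,i}$ as printed, i.e.\ with the factor $(1-p_n^i)^{|V(\Gamma)|}$, the measure $\nu_{n,i}$ is not a probability measure --- its total mass is $\sum_m\binom{n}{m}\bigl(p_n^i(1-p_n^i)\bigr)^m=(1+p_n^i(1-p_n^i))^n\to\infty$ once $np_n^i\to\infty$ --- so your claim that the mismatch of $(1-p_n^i)$-powers is $1+o(1)$ is false for the statement as literally written; you are implicitly proving a corrected statement with exponent $n-|V(\Gamma)|$, and that correction should be made explicit. (Your exact product formula for $\mu_{n,i}(\Gamma)$, with the factor $\binom{n-m}{i}/\binom{n}{i}$, is correct.)

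The decisive gap is the third correction factor. The assertion that $\binom{m}{2}\bigl(|\log(p_n/q_{m,i})|+O(p_n)\bigr)=o(1)$ ``provided $q_{m,i}/p_n\to1$'' is a non sequitur: a factor $\binom{m}{2}\to\infty$ multiplies the error (and as written the $\binom{m}{2}\,O(p_n)$ term alone already diverges), so you need a quantitative rate. Since the edge count is a sufficient statistic for distinguishing $\Omega^V_{p}$ from $\Omega^V_{q}$, their total variation distance is bounded away from $0$ as soon as $|p-q|\binom{m}{2}\gtrsim\sqrt{\binom{m}{2}p}$, i.e.\ you need $|p_n-q_{m,i}|=o(\sqrt{p_n}/m)$ for typical $m$, not merely $q_{m,i}/p_n\to1$. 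Moreover the middling hypothesis cannot even deliver the qualitative convergence you invoke: middling only constrains $\liminf\log_np_n$ and forces $p_n\to0$, so $p_n$ may jump by a fixed factor between consecutive $n$ (e.g.\ $p_n=n^{-\alpha}$ for even $n$ and $2n^{-\alpha}$ for odd $n$), in which case $q_{m,i}/p_n$ stays bounded away from $1$ along half the sequence. Worse, even for smooth $p_n=n^{-\alpha}$ the required rate fails: the typical relative fluctuation of $m=|N_{\Gamma'}(A)|$ about $np_n^i$ is of order $m^{-1/2}$, and it propagates through $q_{m,i}=\min\{p_{n'}:n'p_{n'}^i\le m\}$ to $|q_{m,i}-p_n|\asymp p_n\,m^{-1/2}$, which exceeds the tolerance $\sqrt{p_n}/m$ precisely when $np_n^{i+1}\to\infty$ --- the regime the paper cares about. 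So a pointwise comparison of the conditional edge densities $p_n$ versus $q_{m,i}$ cannot close the argument; either the statement requires a different reading of $q_{m,i}$ (or extra regularity of $p$), or the proof must handle the $m$-dependence of $q_{m,i}$ against the fluctuations of $|N_{\Gamma'}(A)|$ in a way your sketch does not attempt.
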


\subsection{Large cocycles (\ref{con:delicate}(1))} \label{sec:cor41.1}

For a graph $\Gamma$ we say that a pair $(C,E)$ 
satisfies (S) if $C\subseteq V(\Gamma)$,there is no $v \in V(\gamma)$ such that $N_\Gamma(v)\subseteq C$ and $E\subseteq E(\Gamma)$ defines a class $0\neq[\chi_E]\in H^1(\Cl(\Gamma-C);\FF_2)$ with $|E|$ is minimal among the representatives of the class.

The following lemma will be useful in various situations.

\begin{lem}
  \label{lem:triangle}
  Let $\Gamma$ be a graph and assume (S) holds for $(C,E)$. 
  Then every edge in $\Gamma-C$ is contained
  in a triangle in $\Gamma-C$. 
  Conversely, if
  $T$ is a triangle in $\Gamma$ 
  then exactly one of the following situations holds:
  \begin{itemize}
    \item[(i)] $T$ has no edges in $E$, 
    \item[(ii)] $T$ has two edges in $E$, 
    \item[(iii)] $T$ has one edge in $E$ and the opposite vertex in $C$  
  \end{itemize}
\end{lem}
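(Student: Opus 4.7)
The lemma has two assertions, which I would prove separately. The trichotomy for triangles in $\Gamma$ (the ``converse'' direction) follows from the cocycle relation by a case analysis, while the claim that every edge of $\Gamma-C$ lies in a triangle of $\Gamma-C$ will require the minimality clause in (S).

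For the trichotomy, fix a triangle $T$ in $\Gamma$ and set $k := |V(T) \cap C|$. If $k = 0$, then $T$ is a $2$-simplex of $\Cl(\Gamma - C)$ and the cocycle relation $(d\chi_E)(T) = 0 \in \FF_2$ forces the number of edges of $T$ in $E$ to be even, yielding case (i) ($0$ edges) or (ii) ($2$ edges). If $k = 1$, only the edge of $T$ opposite the unique $C$-vertex has both endpoints in $V - C$ and hence can lie in $E \subseteq E(\Gamma-C)$; we get case (i) or (iii). If $k \geq 2$, no edge of $T$ has both endpoints in $V-C$, so no edge of $T$ lies in $E$, placing $T$ in case (i). Since (ii) forces $k = 0$ and (iii) forces $k = 1$, the three alternatives are mutually exclusive and exhaustive.

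For the first assertion I would argue by contradiction. Suppose $e = \{u, v\} \in E(\Gamma - C)$ lies in no triangle of $\Gamma-C$. Then $e$ is a maximal face of $\Cl(\Gamma-C)$, the coboundary $d\chi_{\{e\}}$ is supported on the (empty) set of triangles through $e$, and so $\chi_{\{e\}}$ is itself a cocycle. Consequently $\chi_{E \triangle \{e\}} = \chi_E + \chi_{\{e\}}$ is also a cocycle. If $\chi_{\{e\}}$ is in addition a coboundary -- equivalently, $e$ is a bridge in $\Gamma - C$, with $\chi_{\{e\}} = d\chi_S$ for $S$ the component of $u$ in $(\Gamma - C) - e$ -- then $\chi_{E \triangle \{e\}}$ is cohomologous to $\chi_E$, and in the case $e \in E$ its support has size $|E| - 1$, contradicting the minimality in (S).

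The main obstacle is the complementary situation: either $e$ is not a bridge in $\Gamma - C$ (so $\chi_{\{e\}}$ is not a coboundary), or $e$ is a bridge but $e \notin E$; in both the naive swap above fails. My plan is to invoke the hypothesis in (S) that no $v \in V(\Gamma)$ satisfies $N_\Gamma(v) \subseteq C$ -- so every vertex of $V - C$ has at least one neighbor in $V - C$ -- together with the rigid local structure around $e$ forced by the absence of a triangle, to construct a $0$-cochain $\chi_{S'}$ with $S' \subseteq V - C$ whose coboundary $d\chi_{S'}$ contains $e$ and is contained in $E \cup \{e\}$. Then $\chi_{E \triangle \supp(d\chi_{S'})}$ is a cohomologous cocycle of strictly smaller support, again contradicting minimality. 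Pinning down this explicit construction of $S'$ from the local neighborhood of $e$ is the delicate technical step that I expect to be the main hurdle.
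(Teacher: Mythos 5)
Your treatment of the trichotomy is correct and is essentially the paper's argument: the only input is that the coboundary of $\chi_E$ vanishes on every $2$-simplex of $\Cl(\Gamma-C)$, which rules out exactly the two excluded configurations (three edges of $T$ in $E$, or a single edge of $T$ in $E$ with the opposite vertex outside $C$); your bookkeeping by $k=|V(T)\cap C|$ is just a different organization of the same computation.

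The first assertion is where the genuine gap lies, and it is not merely the ``delicate technical step'' you defer: the route you propose cannot be completed, because the statement that every edge of $\Gamma-C$ lies in a triangle of $\Gamma-C$ is not a consequence of (S) alone. For instance, take $C=\emptyset$ and $\Gamma$ a $4$-cycle: no vertex has $N_\Gamma(v)\subseteq C$, and a single edge $E=\{e\}$ is a minimum-support representative of a nontrivial class in $\Ho^1(\Cl(\Gamma);\FF_2)$, so $(C,E)$ satisfies (S), yet no edge lies in any triangle. So no construction of a $0$-cochain $S'$ (nor any other use of minimality plus the no-vertex-swallowed-by-$C$ condition) can close your argument, even for the edges of $E$ themselves; your completed bridge-swap case is a special case of a claim that is false in this generality. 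The paper proves the first claim by a different, purely degree-theoretic one-liner: by definition of $\delta^1$, every edge $e$ of $\Gamma$ has $|N_\Gamma(e)|\geq\delta^1(\Gamma)$ common neighbors, and the proof invokes the bound $|C|<\delta^1(\Gamma)$, so at least one common neighbor of $e$ survives in $\Gamma-C$ and furnishes the triangle; no cocycle or minimality considerations enter. (Note that this bound on $|C|$ is used in the paper's proof although it is not literally written into (S); it is exactly the hypothesis your approach is missing, and once it is granted the first claim is immediate.)
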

\begin{proof} 
  The first claim follows immediately from
  the definition of $\delta^1$ and the
  fact that $|C| < \delta^1(\Gamma)$. 
  
  If $T$ has three edges in $E$ or
  if $T$ has one edge in $T$ and the
  opposite vertex in $V(\Gamma)-C$ then 
  the cochain supported on $T$ has a non-zero coefficient in the
  coboundary of $E$ in $\Gamma-C$. 
\end{proof}

Set  
$$f(\Gamma)=\min\Big\{\max_{v\in V(\Gamma)-C} \big|N_E (v)\big|\ :\ (C,E)\text{ satisfies (S) for }\Gamma\Big\}.$$  

\begin{lem}  
  \label{lem:dense}
  If $p$ is middling of exponent $3$ then $$\lim_{n\rightarrow\infty}\Pr_{\Gamma\in\Omega^{[n]}_{p_n}}\Big(f(\Gamma) \geq \frac{1}{4}\,n\,p_n^2\Big)=1.$$
\end{lem}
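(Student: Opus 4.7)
My plan is to argue by contradiction: assume $(C,E)$ satisfies (S) but $M:=\max_{v\in V-C}|N_E(v)|<\tfrac{1}{4}np_n^2$. Let $S:=\{v\in V-C:|N_E(v)|\geq 1\}$ denote the vertices incident to $E$-edges. I would work on the aas event that the conclusions of \ref{prop:9}(i) hold for both $k=1$ and $k=2$, so that $|N_\Gamma(v)|\leq np_n(1+\epsilon)$ for every vertex and $|N_\Gamma(\{u,v\})|\geq np_n^2(1-\epsilon)$ for every edge.

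The local step uses \ref{lem:triangle}(ii): for every $\{u,v\}\in E$ and every common $(V-C)$-neighbor $w$, the triangle $\{u,v,w\}\subseteq\Gamma-C$ must carry exactly two $E$-edges, so $w$ lies in exactly one of $N_E(u),N_E(v)$. Therefore $|N_{\Gamma-C}(\{u,v\})|\leq e_u+e_v\leq 2M<\tfrac{1}{2}np_n^2$, and combining with the lower bound on $|N_\Gamma(\{u,v\})|$ gives $|N_\Gamma(\{u,v\})\cap C|>np_n^2(\tfrac{1}{2}-\epsilon)$ for every $\{u,v\}\in E$, hence $|N_\Gamma(v)\cap C|>np_n^2(\tfrac{1}{2}-\epsilon)$ for every $v\in S$. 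Already the subcase $|C|\leq np_n^2(\tfrac{1}{2}-\epsilon)$ contradicts the trivial bound $|N_\Gamma(\{u,v\})\cap C|\leq|C|$.

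In the remaining range $|C|>np_n^2(\tfrac{1}{2}-\epsilon)$ I would globalize by summing over $v\in S$:
\[|E(K_{S,C})\cap E(\Gamma)|=\sum_{v\in S}|N_\Gamma(v)\cap C|>|S|\cdot np_n^2(\tfrac{1}{2}-\epsilon),\]
and comparing with a Chernoff upper-bound analogue of \ref{prop:9}(ii), $|E(K_{A,B})\cap E(\Gamma)|\leq(1+\epsilon)|A||B|p_n$ uniformly in disjoint $(A,B)$ in an appropriate size range, applied at $(S,C)$. Cancelling $|S|$ forces $|C|>\tfrac{np_n(1-2\epsilon)}{2(1+\epsilon)}$. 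Since (S) implicitly requires $|C|<\delta^1(\Gamma)$ (the bound used in the first claim of \ref{lem:triangle}) and $\delta^1(\Gamma)\leq np_n^2(1+\epsilon)$ aas by \ref{prop:9}(i), this yields $p_n>\tfrac{1-2\epsilon}{2(1+\epsilon)^2}\to\tfrac{1}{2}$, contradicting $p_n\to 0$ once $\epsilon$ is fixed small.

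The main obstacle is establishing the Chernoff upper bound uniformly at the adversarial pair $(S,C)$. A direct union bound over disjoint $(A,B)$ requires $|A||B|p_n\gg(|A|+|B|)\log n$, which can fail if $|S|$ is small. I would first use the local step together with \ref{prop:9}(i) for $k=1$ to derive $|S|<\tfrac{2|C|(1+\epsilon)}{p_n}$ from $\sum_{v\in S}|N_\Gamma(v)\cap C|\leq|C|\cdot np_n(1+\epsilon)$, and then invoke the structural consequence of \ref{lem:triangle}(ii) -- every $E$-edge shares a vertex with another $E$-edge via a common triangle, so the $E$-graph has no isolated edges -- to push $|S|$ above the threshold $\log n/p_n$ at which the union bound closes in the middling exponent~$3$ regime.
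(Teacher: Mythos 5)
Your local step is correct and coincides with the paper's key observation: by \ref{lem:triangle}, every common neighbor $w\notin C$ of an edge $\{u,v\}\in E$ satisfies $w\in N_E(u)\cup N_E(v)$, so under the contradiction hypothesis $|N_\Gamma(\{u,v\})\cap C|>n p_n^2(\tfrac12-\epsilon)$, which disposes of all $C$ with $|C|\leq np_n^2(\tfrac12-\epsilon)$. The gap is in the remaining range, which is exactly the heart of the lemma, since the only a priori bound on $|C|$ is of order $\delta^1(\Gamma)\approx np_n^2$. There you need a uniform upper bound $|E(K_{S,C})\cap E(\Gamma)|\leq(1+\epsilon)|S||C|p_n$ at the adversarial pair $(S,C)$, and this is not merely hard to prove by a union bound when $|S|$ is small -- it is false: for a handful of vertices $v$ an adversarial $C$ of size $\approx np_n^2$ can contain almost all of $N_\Gamma(v)$ (the condition in (S) only forbids $N_\Gamma(v)\subseteq C$), giving $\sum_{v\in S}|N_\Gamma(v)\cap C|\approx|S|np_n^2\gg(1+\epsilon)|S||C|p_n\approx|S|np_n^3$. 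Nothing you have established rules out $|S|$ being small: your suggested fix (the $E$-graph has no isolated edges) yields only $|S|\geq 3$ or so, nowhere near the threshold $\log n/p_n$ you would need, and the statement that $E$ (hence $S$) must be large is precisely what the \emph{next} lemma in the paper proves \emph{using} \ref{lem:dense}, so that route would be circular. Your upper bound $|S|<2|C|(1+\epsilon)/p_n$ points the wrong way for this purpose.

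The paper closes exactly this delicate window without any uniform bipartite edge-count estimate. It picks, via \ref{lem:triangle}, a second $E$-edge $e'=\{w,u\}$ adjacent to $e=\{u,v\}$ inside a triangle of $\Gamma-C$, and bounds
$|N_E(u)|+|N_E(v)|+|N_E(w)|\geq|(N_\Gamma(e)\cup N_\Gamma(e'))\setminus C|\geq|N_\Gamma(e)\cup N_\Gamma(e')|-|N_\Gamma(e')|=|N_\Gamma(e)\setminus N_\Gamma(e\cup e')|\geq np_n^2(1-\epsilon)-np_n^3(1+\epsilon)\geq\tfrac34 np_n^2$,
using only $|C|\leq|N_\Gamma(e')|$ (the same implicit $|C|<\delta^1$-type hypothesis you invoke) and \ref{prop:9}(i) for sets of size $2$ and $3$ (exponent $3$). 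The trick of absorbing $|C|$ into $|N_\Gamma(e')|$ and then subtracting off only the triple intersection $N_\Gamma(e\cup e')$ is what makes the adversarial location of $C$ irrelevant; some device of this kind (or an independent lower bound on $|S|$) is what your argument is missing in the range $np_n^2(\tfrac12-\epsilon)<|C|<\delta^1(\Gamma)$.
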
 
\begin{proof} Assume that $\Gamma$ is a 
  graph and $(C,E)$ satisfy (S). Since
  $[\chi_E] \neq 0$ we have $|E| >0$ and we can choose $e=\{u,v\}\in E$.  By
  \ref{lem:triangle} there is $w \in
  V(\Gamma) -C$ such that
  $\{w,u,v\}$ is a triangle in $\Gamma-C$.
  Again by \ref{lem:triangle} it follows
  that exactly one of the edges $\{w,u\}$
  and $\{w,v\}$ lies in $E$. We may assume
  $e' = \{w,u\}$ lies in $E$.
  
  We have 
  \begin{flalign}
 \nonumber   | N_E(u)|+ |N_E& (v)|+|N_E(w)|  \\
 \label{eq:first}  &\geq  |(N_\Gamma (e)\cup N_\Gamma (e')) \setminus C| \\
    & \geq  |N_\Gamma (e) \cup N_\Gamma (e')| - |C| \\
 \nonumber   & \overset{|N_\Gamma (e')| \geq |C|}{\geq}  |N_\Gamma (e) \cup N_\Gamma (e')| - |N_\Gamma (e')| \\
\nonumber	  & =  |N_\Gamma (e) - (N_\Gamma (e) \cap N_\Gamma (e'))| \\
\nonumber	  & =  |N_\Gamma (e) - N_\Gamma (e\cup e') | \\
\label{eq:3.1}	  & \geq   n\,p_n^2\,(1-\epsilon - p_n -\epsilon p_n)\\
\label{eq:l2}	  & {\geq} \, \frac{3}{4}\,n\,p_n^2.
  \end{flalign}
The first inequality \eqref{eq:first} follows from \ref{lem:triangle}. Note that if $x\in N_\Gamma (e)$ then $\{x,u,v\}$ is a triangle in $\Gamma-C$ with $e=\{u,v\}\in E$ so exactly one of $\{x,u\}$ and $\{x,v\}$ is also in $E$. A similar reasoning applies to $e'$.  The penultimate \eqref{eq:3.1} inequality holds aas by \ref{prop:9}(i) with exponent $3$. The last inequality \eqref{eq:l2} holds for sufficiently large $n$ because $p_n$ approach $0$.  

If follows that at least one of the sets
$N_E(u)$, $N_E(v)$ or $N_E(w)$ has size
$\geq \frac{1}{4} \, n \, p_n^2$. This implies the assertion.
\end{proof}
   
\begin{lem}   
  If $p$ is middling of exponent $4$, $0 < \epsilon < 1$, $C\subseteq [n]$
  with $|C|\leq n\,p_n^{i+1}(2-\epsilon)$ and $r_1 > 4$ then
  $\|\Ho^1(\Cl(\Gamma-C);\FF_2)\|_c\geq n^{i+1}p_n^{r_1}$.
In particular, 
\ref{con:delicate}(2) holds if $i = 1$, $r_1>4$ and $w_1=4$.  \end{lem}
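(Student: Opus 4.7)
The plan is to take a minimal-support non-trivial representative $\chi_E$ of a class in $\Ho^1(\Cl(\Gamma-C);\FF_2)$ (if no such class exists the conclusion is vacuous) and show that $|E|\geq \frac{1}{12}(1-\epsilon)\,n^{2}p_n^{4}$ asymptotically almost surely; this dominates $n^{2}p_n^{r_1}$ for any $r_1>4$ since $p_n^{r_1-4}\to 0$. First I would verify that $(C,E)$ satisfies $(S)$: by \ref{prop:9}(i) with $k=1$ every vertex has $|N_\Gamma(v)|\geq np_n(1-\epsilon)$ aas, which exceeds $|C|\leq np_n^{2}(2-\epsilon)$, so no vertex satisfies $N_\Gamma(v)\subseteq C$, and hence \ref{lem:dense} and \ref{lem:triangle} are both available.

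By \ref{lem:dense} there exists $v_0\in V(\Gamma)\setminus C$ with $|N_E(v_0)|\geq \frac{1}{4}np_n^{2}$. Set $S_0:=N_E(v_0)$ and $W_0:=(N_\Gamma(v_0)\setminus C)\setminus S_0$. Testing the minimality of $|E|$ against the coboundary $\partial\chi_{\{v_0\}}$, whose support is the set of $|N_\Gamma(v_0)\setminus C|$ edges of $\Gamma-C$ incident to $v_0$, yields $|E|+|N_\Gamma(v_0)\setminus C|-2|N_E(v_0)|\geq |E|$, i.e.\ $|N_E(v_0)|\leq \tfrac{1}{2}|N_\Gamma(v_0)\setminus C|$. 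Consequently $|W_0|\geq \tfrac{1}{2}|N_\Gamma(v_0)\setminus C|\geq \tfrac{1}{3}np_n$ aas, using $|C|=o(np_n)$.

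The crucial structural step is the following application of \ref{lem:triangle}: for every $u\in S_0$ and every $w\in W_0\cap N_\Gamma(u)$, the triangle $\{v_0,u,w\}$ lies in $\Cl(\Gamma-C)$, contains $\{v_0,u\}\in E$ and $\{v_0,w\}\notin E$ (since $w\notin S_0$), and has all three vertices outside $C$; since $\{v_0,u\}\in E$ excludes case (i) and $w\notin C$ excludes case (iii), case (ii) of \ref{lem:triangle} forces $\{u,w\}\in E$. Hence $|E|\geq |E(K_{S_0,W_0})\cap E(\Gamma)|$. A single application of \ref{prop:9}(ii) with $a_n=\tfrac{1}{4}np_n^{2}$ (so $\alpha_0=1,\alpha_1=2$) and $b_n=\tfrac{1}{3}np_n$ (so $\beta_0=\beta_1=1$) --- for which $(\alpha_1+\beta_1+1)/(\alpha_0+\beta_0-1)=4$ exactly matches the middling exponent $w=4$ --- then delivers $|E(K_{S_0,W_0})\cap E(\Gamma)|\geq \tfrac{1}{12}(1-\epsilon)\,n^{2}p_n^{4}$ aas, completing the argument. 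The main point to appreciate is that minimality of $E$ automatically prevents $N_E(v_0)$ from occupying more than half of $N_\Gamma(v_0)\setminus C$, so the complementary set $W_0$ is guaranteed to be large enough to feed \ref{prop:9}(ii); this is what converts the single high-$E$-degree vertex supplied by \ref{lem:dense} into a quadratic-sized lower bound on $|E|$.
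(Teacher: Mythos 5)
Your proposal is correct and follows essentially the same route as the paper's proof: use \ref{lem:dense} to find a vertex $v$ with $|N_E(v)|\geq\frac{1}{4}np_n^2$, use minimality of $E$ against the coboundary of $v$ to show the complementary neighborhood $B=N_{\Gamma-C}(v)\setminus N_E(v)$ has size at least $|N_E(v)|$ (hence of order $np_n$), use \ref{lem:triangle} to conclude every $\Gamma$-edge between these two sets lies in $E$, and finish with \ref{prop:9}(ii) at exponent $4$. The only differences are cosmetic (direct argument rather than contradiction, plus your explicit check via \ref{prop:9}(i) that no $N_\Gamma(v)\subseteq C$ occurs aas).
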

\begin{proof} Consider a graph $\Gamma$ with $V(\Gamma)=[n]$.  For contradiction
  assume there is a pair $(C,E)$ in which $C\subseteq [n]$ with no
  $N_\Gamma(v)\subseteq C$ and $|C|\leq \frac{3}{2}np_n^2$ as well as some
  $0\not=[\chi_E]\in H^1(\Cl(\Gamma-C);\FF_2)$ with $|E|<n^2p_n^4\frac{1}{3}$.
  It suffices to show that the probability that $\Gamma\in\Omega^{[n]}_{p_n}$
  has such a pair approaches $0$ as $n$ grows.  

  If $\Gamma$ has such a pair $(C,E)$ choose one also satisfying (S).
  Note that if $v \in [n] -C$ and $A = N_E(v)$ and
  $B = N_{\Gamma-C}(v) - N_E(v)$ then by \ref{lem:triangle} 
  $$E\cap E(K_{A,B}) =E(\Gamma) \cap E(K_{A,B}).$$ 
  Hence the assertion follows if we show
  that there is a choice of $v \in V(\Gamma)-C$ such that
  $|E(\Gamma) \cap K_{A,B}| \geq \frac{1}{12} \,n^2\,p_n^4$ aas.
  
  This will follow from \ref{prop:9}(ii) 
  once we have shown that $|A|$ and $|B|$
  are ``large.'' Note that since $|E|$ is minimal we have 
  $$|E| \leq |E \cup \partial^1(v)| =
  |E|+|B|-|A|.$$
  It follows that $|B| \geq |A|$. 
  By \ref{lem:dense} $f(\Gamma) \geq\frac{1}{4}\ n\,p_n^2$ aas. 
  Hence we can
  choose $v$ such that $|N_E(v)| = |A| 
  \geq \frac{1}{4}\, n\, p_n^2$.
  Also by \ref{prop:9}(i) and the exponent $4\geq 1$ we
  have that for any $\epsilon > 0$ aas
  \begin{align*} 
   2|B| & \geq & |A|+|B| \\ & = & |N_{\Gamma-C}(v)| \\ & \geq & |N_\Gamma(v)| - |C| \\ & \overset{\ref{prop:9}(i)}{\geq} &
   n\,p_n\,(1-\epsilon) + |C| \\
   & \geq  & n\, p_n\, (1+\epsilon)
   \end{align*}  
   Now by \ref{prop:9} (ii) for
   $\alpha_1 = 2$ and $\alpha_0=\beta_0=\beta_1 = 1$, and exponent
   $4\geq \frac{1+2+1}{1+1-1}=4$ we get that for any 
   $\epsilon > 0$ there is aas
   $$|E|\geq|E(\Gamma) \cap E(K_{A,B})| \geq 
   \frac{1}{2}\,n^2\,p_n^4\,(1-\epsilon).$$ 
\end{proof}

\subsection{Many small cycles (\ref{con:delicate}(2))} \label{sec:cor41.2}
  By \ref{prop:conthm} the next lemma completes the proof of \ref{thm:d1=k1}. 
In it proof we write $\tau(\Gamma)$ for the minimum 
cardinality of a set $D\subseteq V(\Gamma)$ for which $\Gamma-D$ has at least two pairs of vertices with distance greater than $2$.

\begin{lem}  
  \ref{con:delicate}(1) holds for $i=1$, $\ell_1=5$ and $W_1=3$.  \end{lem}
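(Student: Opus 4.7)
The plan is to prove the lemma in two largely independent steps. First, a concentration step: I will show aas that $\tau(\Gamma)>np_n^2(2-\epsilon)$, so for any $C\subseteq[n]$ with $|C|\leq np_n^2(2-\epsilon)$ the graph $\Gamma-C$ contains at most one pair of vertices at distance greater than $2$. Second, a deterministic combinatorial step: for any such $\Gamma-C$, I will prove that $\Ho_1(\Cl(\Gamma-C);\FF_2)$ is spanned by $1$-cycles of support at most $5$, giving $\|\Ho_1\|_h\leq 5$.

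For the first step, two distinct pairs $\{u_1,v_1\},\{u_2,v_2\}\subseteq V(\Gamma)-C$ are both at distance $>2$ in $\Gamma-C$ iff both their common neighborhoods lie in $C$, forcing $|C|\geq|N_\Gamma(\{u_1,v_1\})\cup N_\Gamma(\{u_2,v_2\})|$. By $\ref{prop:9}$(i) at $k=2$ each common neighborhood has size $\geq np_n^2(1-\epsilon')$ aas uniformly. Their intersection is $N_\Gamma(\{u_1,v_1,u_2,v_2\})$: when the two pairs share a vertex this has size $\leq np_n^3(1+\epsilon')=o(np_n^2)$ via $\ref{prop:9}$(i) at $k=3$, and when the pairs are disjoint (so $|A|=4$ and $\ref{prop:9}$(i) is unavailable) I invoke the multiplicative Chernoff upper tail directly, obtaining the per-quadruple failure probability $\Pr(|N_\Gamma(A)|\geq\epsilon' np_n^2/10)\leq(10ep_n^2/\epsilon')^{\epsilon' np_n^2/10}=\exp(-\Omega(np_n^2))$, which beats the $O(n^4)$ union bound since $np_n^2\gg\log n$ under exponent $3$. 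Inclusion-exclusion then yields $|N_\Gamma(A_1)\cup N_\Gamma(A_2)|>np_n^2(2-\epsilon)$ aas for every admissible pair of pairs, so two bad pairs cannot coexist.

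For the second step, I induct on edge-count of a simple $1$-cycle $Z=v_0 v_1\cdots v_{k-1}v_0$ in $\Gamma-C$. The base case $k\leq 5$ is immediate. For $k\geq 6$ the ``outer pairs'' $\{v_i,v_{i+3\bmod k}\}$ form at least three distinct unordered pairs in $V(\Gamma)-C$; since at most one such pair is bad, some outer pair, say $\{v_0,v_3\}$, has distance $\leq 2$ in $\Gamma-C$. If $v_0 v_3\in E$, set $T=v_0 v_1 v_2 v_3 v_0$ and write $Z=T+Z'$ where $Z'$ is a $1$-cycle of support $k-2$. Otherwise choose a common neighbor $w\in V(\Gamma)-C$ and let $P$ be the $1$-cycle supported on $\{v_0 v_1,v_1 v_2,v_2 v_3,v_3 w,w v_0\}$ (at most $5$ edges, fewer if $w$ coincides with some $v_i$ thanks to $\FF_2$ cancellation); then $Z=P+Z'$ with $|Z'|\leq k-1$. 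Decomposing $Z'$ into edge-disjoint simple cycles and applying the induction hypothesis to each shows $[Z]$ lies in the span of classes of $1$-cycles of support $\leq 5$.

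The main obstacle is the concentration of $|N_\Gamma(A)|$ at $|A|=4$ under the weak hypothesis of exponent $3$, where $\mu=np_n^4$ may tend to $0$ and $\ref{prop:9}$(i) does not apply. This is circumvented by the multiplicative Chernoff upper tail, exploiting the large gap between $\mu$ and the target threshold $\epsilon' np_n^2$ (a gap of order $p_n^{-2}\to\infty$), together with $np_n^2\gg\log n$ guaranteed by exponent $3$.
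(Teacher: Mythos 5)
Your proof is correct and follows essentially the same route as the paper: show aas that two pairs of vertices at distance greater than $2$ in $\Gamma-C$ would force $|C|\geq|N_\Gamma(A_1)\cup N_\Gamma(A_2)|>n p_n^2(2-\epsilon)$ via \ref{prop:9}(i), and then split any cycle of length at least $6$ along a distance-$\leq 2$ pair to get a spanning set of cycles of support at most $5$. The only cosmetic difference is your treatment of the four-vertex common neighborhood by a direct Chernoff upper tail; the paper instead bounds it by $n p_n^3(1+\epsilon)$ using the $k=3$ case of \ref{prop:9}(i) (monotonicity of $N_\Gamma$ under taking subsets), which avoids any tail estimate outside \ref{thm:CH}.
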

\begin{proof}
  Note that it suffices to show that if $\Gamma\in\Omega^{[n]}_{p_n}$ then aas
  $\tau(\Gamma)>\frac{3}{2}np_n^2$ since if a graph has at most one pair of
  vertices of distance more than $2$ then every cycle of
  length at least $6$ has a chord and hence if $|C|<\tau(\Gamma)$ then
  $\|H_1(\Gamma-C);\FF_2)\|_h\leq 5$.
Thus it suffices to show that aas for any two 
pairs $x,y$ and $x',y'$ of vertices in $V(\Gamma)$ we have
$|N_\Gamma(\{x,y\}) \cup N_\Gamma(\{x',y'\})| > \frac{3}{2}np_n^2$.
  We apply \ref{prop:9}(i) with exponent $2$ to show that 
  $$|N_\Gamma(\{x,y\})|, |N_\Gamma(\{x',y'\}| \geq np_n^2(1-\epsilon)$$
  aas and with exponent $3$ to show that
  $$|N_\Gamma(\{x,x',y,y'\})| \leq np_n^3 (1+\epsilon)$$
  aas. The result follows. 
  \end{proof}

\end{document}